\theoremstyle{plain}
\newtheorem{thm}{Theorem}[section]
\newtheorem{prop}[thm]{Proposition}
\newtheorem{lem}[thm]{Lemma}
\theoremstyle{definition}
\newtheorem{dfn}[thm]{Definition}
\newtheorem{rem}[thm]{Remark}
\numberwithin{equation}{section}
\renewenvironment{proof}[1][\proofname]{\par
  \pushQED{\qed}%
  \normalfont \topsep6\p@\@plus6\p@\relax
  \trivlist
  \item[\hskip\labelsep
%        \itshape
	\bfseries
    #1\@addpunct{.}]\ignorespaces
}{%
  \popQED\endtrivlist\@endpefalse
}
\DeclareMathOperator{\ev}{ev}
\DeclareMathOperator{\tr}{tr}
\DeclareMathOperator{\ad}{ad}
\newcommand{\ve}{\varepsilon}
\newcommand{\bbZ}{\mathbb{Z}}
\newcommand{\bbC}{\mathbb{C}}
\newcommand{\affY}{Y(\affsl)}
\newcommand{\gl}{\mathfrak{gl}}
\newcommand{\fraksl}{\mathfrak{sl}}
\newcommand{\affsl}{\hat{\mathfrak{sl}}_N}
\newcommand{\bfid}{\mathbf{1}}
\newcommand{\affnil}{\hat{\mathfrak{n}}}
\newcommand{\affCar}{\hat{\mathfrak{h}}}
\title{On Guay's evaluation map for affine Yangians}
\author{Ryosuke Kodera}
\date{}
\let\@old@@maketitle=\@maketitle
\def\@maketitle{%
\footnotetext{%
\hspace*{-1em}\hspace*{-\footnotesep}%
%Department of Mathematics,
%Graduate School of Science, 
%Kobe University\\
%E-mail address: kodera@math.kobe-u.ac.jp\\
E-mail address: kodera@math.s.chiba-u.ac.jp
%Website: http://www2.kobe-u.ac.jp/\~{}kryosuke/
}
\@old@@maketitle
}
\begin{document}
\maketitle

\begin{abstract}
We give a detailed proof of the existence of evaluation map for affine Yangians of type A to clarify that it needs an assumption on parameters.
This map was first found by Guay but a proof of its well-definedness and the assumption have not been written down in the literature.
We also determine the highest weights of evaluation modules defined as the pull-back of integrable highest weight modules of the affine Lie algebra $\hat{\gl}_N$ by the evaluation map.
\end{abstract}

\section{Introduction}

The affine Yangian $\affY$ of type A is a two-parameter deformation of the universal enveloping algebra of the universal central extension of a double loop Lie algebra $\mathfrak{sl}_N[s,t^{\pm 1}]$.
It may be regarded as an additive degeneration of the quantum toroidal algebra.
While the representation theory of quantum toroidal algebras and affine Yangians is expected to be very rich, it is still mysterious and intricate.
The author constructed certain representations of $\affY$ in \cite{MR3898327, MR3869424}.
These results are degenerate analogs of works by Varagnolo-Vasserot~\cite{MR1626481}, Saito-Takemura-Uglov~\cite{MR1603798}, Takemura-Uglov~\cite{MR1710750}, Nagao~\cite{MR2583334} in the quantum toroidal case.
Other known constructions of representations are by the Schur-Weyl type functor \cite{MR2199856}, by geometric approaches \cite{MR1818101,MR2827177}, and by vertex operators \cite{MR4014633}.
A different method to construct representations of the affine Yangian is considered in this paper.

Guay introduced an algebra homomorphism from the affine Yangian $\affY$ to a completion of $U(\hat{\gl}_N)$ in \cite{MR2323534}.
This is an affine analog of the well-known evaluation map from the Yangian $Y(\mathfrak{sl}_N)$ to $U(\gl_N)$.
We refer to Guay's map as evaluation map for the affine Yangian.
Since the classical evaluation map for $Y(\mathfrak{sl}_N)$ plays a fundamental role for the construction of irreducible representations, Guay's evaluation map should be important to develop the representation theory of the affine Yangian.

In \cite{MR2323534}, explicit values of the evaluation map are given only for part of the generators of $\affY$.
It is stated that the values for other generators are determined from the defining relations of $\affY$ and a property of the map, and a proof of its well-definedness is omitted.
One of the goals of the present paper is to provide a detailed proof of the existence of the evaluation map for $\affY$.
To accomplish this, we give explicit values of the evaluation map for distinguished generators of degree one.
By results of Guay~\cite{MR2323534} and Guay-Nakajima-Wendlandt~\cite{MR3861718}, the affine Yangian is generated by these and degree zero elements with reasonable defining relations.

In the course of checking the well-definedness, we have realized that we need to impose a certain relation among the parameters and the central element of the affine Yangian.
To clarify this fact is also a purpose to write this paper.
So far we have not yet been able to define the evaluation map for general parameters. 
The main results of the paper are Theorem~\ref{thm:Guay} and Theorem~\ref{thm:main}.
Theorem~\ref{thm:Guay} is Guay's original version.
The original Guay's evaluation map matches with lowest weight modules of $\hat{\gl}_N$.
To deal with highest weight modules, we introduce an opposite evaluation map in Theorem~\ref{thm:main}.

We can pull back integrable highest weight modules of $\hat{\gl}_N$ via the evaluation map in Theorem~\ref{thm:main} to make them $\affY$-modules.
The resulting modules satisfy the highest weight condition for affine Yangian.
We determine the highest weights of these evaluation modules in Theorem~\ref{thm:highest_weight} and investigate an analog of the Drinfeld polynomials.
The level-one Fock representation constructed by the author in \cite{MR3898327, MR3869424} turns out to be isomorphic to an evaluation module if we specialize the parameters.

We note that we consider the affine Yangian $\affY$ for $N \geq 3$.
Although we expect the existence of the evaluation map for $N=2$ case, we have not proved it.
The main reason is absence of Theorem~\ref{thm:GNW} which reduces the defining relations of the affine Yangian to those among the generators of degree zero and one.
The case of $Y(\hat{\mathfrak{gl}}_1)$ appears in a work by Schiffmann-Vasserot~\cite{MR3150250}.

The quantum toroidal case is studied by Miki~\cite{MR1694256} and Feigin-Jimbo-Mukhin~\cite{FJM}.
However, the evaluation map we consider here is not a direct analog of theirs.
Our evaluation map has a formula only for the generators of degree zero and one, while it is given for all generators of the quantum toroidal algebra in \cite{MR1694256, FJM}.
One can twist the homomorphism in \cite{MR1694256, FJM} by Miki's automorphism and obtain another evaluation map.
It seems that our evaluation map may be regarded as a degeneration of the latter one.

This paper is organized as follows.
Section~2 is devoted to preliminaries on the affine Yangian $\affY$ and the affine Lie algebra $\hat{\gl}_N$.
We provide completions of $U(\hat{\gl}_N)$ which are used to formulate the main results.
In Section~3, we give a proof of the existence of the two kinds of evaluation map.
Then we determine the highest weights of evaluation modules in Section~4.

\subsection*{Added remark}

After this paper was published online, an error was found.
There was an error in the proof of Theorem~\ref{thm:Guay} (main theorem) in the earlier version, and consequently it was wrong as stated.
We need to correct the definition of the affine Lie algebra $\hat{\gl}_N$.
More precisely we need to modify the defining relation of the diagonal Heisenberg part of $\hat{\gl}_N$.
Then we can show that there exists an algebra homomorphism from the affine Yangian to a completion of the universal enveloping algebra of $\hat{\gl}_N$ as desired.
The condition among parameters and the explicit form of the evaluation map need not to be changed.
The actual change in the proof of Theorem~\ref{thm:Guay} concerns the relation (\ref{eq:mainthmHH2}), namely Lemma~\ref{lem:HH2}.
Let us summarize corrections made in this version:
the definition of $\hat{\gl}_N$ in Section~\ref{subsection:affine_Lie_algebra}; the values of $[A_i,A_j]$ and $[B_i,A_j]$ in Lemma~\ref{lem:HH2} along with its proof; Remark~\ref{rem:thm}.

\subsection*{Acknowledgments}
The author thanks to Mamoru Ueda for pointing out an error in the published version of this paper.
This work was supported by JSPS KAKENHI Grant Number 17H06127 and 18K13390.

\section{Preliminaries}\label{sec:affine_Yangian}

\subsection{Affine Yangian}

Fix an integer $N \geq 3$ throughout the paper.
We use the notation $\{x,y\}=xy+yx$.

\begin{dfn}
The affine Yangian $\affY = Y_{\ve_1,\ve_2}(\affsl)$ is the algebra over $\bbC$ generated by $x_{i,r}^{+}, x_{i,r}^{-}, h_{i,r}$ $(i \in \mathbb{Z} / N\mathbb{Z}, r \in \mathbb{Z}_{\geq 0})$ with parameters $\ve_1, \ve_2 \in \bbC$ subject to the relations:
\begin{equation*}
	[h_{i,r}, h_{j,s}] = 0, 
	\quad [x_{i,r}^{+}, x_{j,s}^{-}] = \delta_{ij} h_{i, r+s}, 
	\quad [h_{i,0}, x_{j,r}^{\pm}] = \pm a_{ij} x_{j,r}^{\pm},
\end{equation*}
\begin{equation*}
	[h_{i, r+1}, x_{j, s}^{\pm}] - [h_{i, r}, x_{j, s+1}^{\pm}] 
	= \pm a_{ij} \dfrac{\varepsilon_1 + \varepsilon_2}{2} \{h_{i, r}, x_{j, s}^{\pm}\} 
	- m_{ij} \dfrac{\varepsilon_1 - \varepsilon_2}{2} [h_{i, r}, x_{j, s}^{\pm}],
\end{equation*}
\begin{equation*}
	[x_{i, r+1}^{\pm}, x_{j, s}^{\pm}] - [x_{i, r}^{\pm}, x_{j, s+1}^{\pm}] 
	= \pm a_{ij}\dfrac{\varepsilon_1 + \varepsilon_2}{2} \{x_{i, r}^{\pm}, x_{j, s}^{\pm}\} 
	- m_{ij} \dfrac{\varepsilon_1 - \varepsilon_2}{2} [x_{i, r}^{\pm}, x_{j, s}^{\pm}],
\end{equation*}
\begin{equation*}
	\sum_{w \in \mathfrak{S}_{1 - a_{ij}}}[x_{i,r_{w(1)}}^{\pm}, [x_{i,r_{w(2)}}^{\pm}, \dots, [x_{i,r_{w(1 - a_{ij})}}^{\pm}, x_{j,s}^{\pm}]\dots]] = 0 \ \ (i \neq j),
\end{equation*}
where
\[
	a_{ij} =
	\begin{cases}
		2  &\text{if } i=j, \\
		-1 &\text{if } i=j \pm 1, \\
		0  &\text{otherwise,}
	\end{cases}\quad
	m_{ij} =
	\begin{cases}
		1  &\text{if } j=i-1, \\
		-1 &\text{if } j=i+1, \\
		0  &\text{otherwise.}
	\end{cases}
\]
\end{dfn}
The subalgebra generated by $x_{i,0}^+, x_{i,0}^-, h_{i,0}$ ($i \in \bbZ / N\bbZ$) is isomorphic to $U(\affsl)$ (see \cite[Theorem~6.1]{MR2323534} for $N \geq 4$ and \cite[Theorem~6.9]{MR4014633} in general).
We set $\hbar = \ve_1 + \ve_2$ and $\tilde{h}_{i,1} = h_{i,1} - \dfrac{\hbar}{2} h_{i,0}^2$.

\begin{thm}[Guay~\cite{MR2323534}, Proposition~2.1, Guay-Nakajima-Wendlandt~\cite{MR3861718}, Theorem~2.12 and Section~6]\label{thm:GNW}
The affine Yangian $\affY$ is isomorphic to the algebra generated by $x_{i,r}^{+}, x_{i,r}^{-}, h_{i,r}$ $(i \in \mathbb{Z} / N\mathbb{Z}, r = 0,1)$ subject to the relations:
\begin{equation*}
[h_{i,r}, h_{j,s}] = 0,
\end{equation*}
\begin{equation*}
[x_{i,0}^{+}, x_{j,0}^{-}] = \delta_{ij} h_{i, 0},
 \quad [x_{i,1}^{+}, x_{j,0}^{-}] = \delta_{ij} h_{i, 1} = [x_{i,0}^{+}, x_{j,1}^{-}],
\end{equation*}
\begin{equation*}
\ [h_{i,0}, x_{j,r}^{\pm}] = \pm a_{ij} x_{j,r}^{\pm},
 \quad [\tilde{h}_{i,1}, x_{j,0}^{\pm}] = \pm a_{ij}\left(x_{j,1}^{\pm}-m_{ij}\dfrac{\varepsilon_1 - \varepsilon_2}{2} x_{j, 0}^{\pm}\right),
\end{equation*}
\begin{equation*}
[x_{i, 1}^{\pm}, x_{j, 0}^{\pm}] - [x_{i, 0}^{\pm}, x_{j, 1}^{\pm}] = \pm a_{ij}\dfrac{\varepsilon_1 + \varepsilon_2}{2} \{x_{i, 0}^{\pm}, x_{j, 0}^{\pm}\} - m_{ij} \dfrac{\varepsilon_1 - \varepsilon_2}{2} [x_{i, 0}^{\pm}, x_{j, 0}^{\pm}],
\end{equation*}
\begin{equation*}
(\ad x_{i,0}^{\pm})^{1-a_{ij}} (x_{j,0}^{\pm})= 0 \ \ (i \neq j), \label{eq:16}
\end{equation*}
where we set $\tilde{h}_{i,1} = {h}_{i,1} - \dfrac{\ve_1 + \ve_2}{2} h_{i,0}^2$.
\end{thm}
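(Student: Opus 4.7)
The plan is to establish the stated isomorphism by constructing mutually inverse algebra homomorphisms. Let $\bfA$ denote the abstract algebra defined by the finite generator set and reduced relations on the right-hand side.

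For the homomorphism $\bfA \to \affY$, I would verify that each defining relation of $\bfA$ is a consequence of the original defining relations of the affine Yangian. Apart from the specializations $r,s \in \{0,1\}$, which are immediate, the only non-trivial check is the relation involving $[\tilde h_{i,1}, x_{j,0}^\pm]$. This follows by taking $r=s=0$ in the original loop-type relation
\begin{equation*}
[h_{i,1}, x_{j,0}^\pm] - [h_{i,0}, x_{j,1}^\pm] = \pm a_{ij}\tfrac{\ve_1+\ve_2}{2}\{h_{i,0}, x_{j,0}^\pm\} - m_{ij}\tfrac{\ve_1-\ve_2}{2}[h_{i,0}, x_{j,0}^\pm],
\end{equation*}
applying $[h_{i,0}, x_{j,1}^\pm] = \pm a_{ij} x_{j,1}^\pm$, and using the identity $[h_{i,0}^2, x_{j,0}^\pm] = \pm a_{ij}\{h_{i,0}, x_{j,0}^\pm\}$ to substitute $h_{i,1} = \tilde h_{i,1} + \tfrac{\hbar}{2} h_{i,0}^2$; the two $\hbar$-terms then cancel, leaving the desired formula.

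For the inverse direction, I would recursively define higher generators inside $\bfA$ by
\begin{equation*}
x_{i,r+1}^\pm := \pm\tfrac{1}{2}[\tilde h_{i,1}, x_{i,r}^\pm], \qquad h_{i,r} := [x_{i,r}^+, x_{i,0}^-],
\end{equation*}
and then check that all of the original relations of $\affY$ hold for these inductively constructed elements: the commutativity $[h_{i,r}, h_{j,s}] = 0$, the cross-commutators $[x_{i,r}^+, x_{j,s}^-] = \delta_{ij} h_{i,r+s}$ (in particular well-definedness of $h_{i,r}$, i.e.\ independence from the specific factorization used to define it), the full loop-type relations among the $h$'s and $x$'s, and finally the higher Serre relations $(i\neq j)$.

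The hard part is precisely this last step. Once the recursive formulas are in place, the full family of loop relations and the higher Serre relations must be derived by induction on the degrees $r,s$, and these inductions are delicate: one must simultaneously propagate compatibility between the two recursions and track the $\ve_1,\ve_2$-dependence carefully. This is exactly the content of Proposition~2.1 in Guay~\cite{MR2323534}, completed and corrected in Theorem~2.12 and Section~6 of Guay-Nakajima-Wendlandt~\cite{GNW}. I would therefore appeal directly to those sources for the detailed inductive verification rather than reproduce the calculations here.
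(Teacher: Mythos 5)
Your proposal is fine and matches the paper's treatment: the paper gives no proof of this theorem at all, attributing it entirely to Guay (Proposition~2.1) and Guay--Nakajima--Wendlandt (Theorem~2.12 and Section~6), which is exactly where you defer the substantive inductive work. Your extra verification of the easy direction (deriving the $[\tilde h_{i,1}, x_{j,0}^{\pm}]$ relation from the $r=s=0$ loop relation together with $[h_{i,0}^2, x_{j,0}^{\pm}] = \pm a_{ij}\{h_{i,0}, x_{j,0}^{\pm}\}$) is correct.
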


We can slightly reduce the relations as follows. 
Proofs are straightforward.
%lem
\begin{lem}\label{lem:reduce1}
The relation $[h_{i,0}, x_{j,1}^{\pm}]=\pm a_{ij} x_{j,1}^{\pm}$ is deduced from the following:
\[
	[\tilde{h}_{i,1}, h_{j,0}] = 0,\quad [h_{i,0}, x_{j,0}^{\pm}] = \pm a_{ij} x_{j,0}^{\pm},\quad [\tilde{h}_{i,1}, x_{i,0}^{\pm}]= \pm 2 x_{i,1}^{\pm}.
\]
\end{lem}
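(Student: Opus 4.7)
The plan is to derive $x_{j,1}^{\pm}$ itself from the given relations and then transport the $h_{i,0}$-action through a Jacobi identity. Concretely, specializing the third hypothesis to $i=j$ gives
\[
x_{j,1}^{+} = \tfrac{1}{2}[\tilde{h}_{j,1}, x_{j,0}^{+}], \qquad x_{j,1}^{-} = -\tfrac{1}{2}[\tilde{h}_{j,1}, x_{j,0}^{-}],
\]
so $x_{j,1}^{\pm}$ is already expressible in terms of degree-zero data and $\tilde{h}_{j,1}$, which is exactly what is needed to reduce the question to the relations we have.

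Next I would compute $[h_{i,0}, x_{j,1}^{\pm}]$ by substituting the above and expanding the inner bracket with the Jacobi identity:
\[
[h_{i,0}, [\tilde{h}_{j,1}, x_{j,0}^{\pm}]] = [[h_{i,0}, \tilde{h}_{j,1}], x_{j,0}^{\pm}] + [\tilde{h}_{j,1}, [h_{i,0}, x_{j,0}^{\pm}]].
\]
The first term on the right vanishes by the first hypothesis $[\tilde{h}_{i,1}, h_{j,0}]=0$ (applied with the roles of $i$ and $j$ exchanged). For the second term, the second hypothesis gives $[h_{i,0}, x_{j,0}^{\pm}] = \pm a_{ij} x_{j,0}^{\pm}$, and pulling this scalar out and re-applying the third hypothesis recovers $\pm a_{ij}\cdot(\pm 2 x_{j,1}^{\pm})$.

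Putting the signs together carefully in the $+$ and $-$ cases separately yields $[h_{i,0}, x_{j,1}^{\pm}] = \pm a_{ij} x_{j,1}^{\pm}$, as desired. There is no real obstacle here: the argument is a one-line Jacobi identity, and the only thing to watch is the bookkeeping of signs, since the third relation $[\tilde{h}_{i,1}, x_{i,0}^{\pm}]=\pm 2 x_{i,1}^{\pm}$ introduces a $+$ on the $x^{+}$ side and a $-$ on the $x^{-}$ side, and these sign pairs must combine correctly to reproduce the $\pm a_{ij}$ on the right-hand side of the target relation.
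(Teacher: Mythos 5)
Your argument is correct: writing $x_{j,1}^{\pm}=\pm\tfrac12[\tilde{h}_{j,1},x_{j,0}^{\pm}]$ and applying the Jacobi identity together with $[\tilde{h}_{j,1},h_{i,0}]=0$ gives exactly $[h_{i,0},x_{j,1}^{\pm}]=\pm a_{ij}x_{j,1}^{\pm}$, with the signs combining as you describe. The paper omits the proof as ``straightforward,'' and this is precisely the intended one-line Jacobi argument.
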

%lem
\begin{lem}\label{lem:reduce2}
The relation $[x_{i,0}^{+}, x_{j,1}^{-}]=\delta_{ij} h_{i, 1}$ is deduced from the following:
\[
	[\tilde{h}_{i,1}, h_{j,0}] = 0,\quad [x_{i,r}^{+}, x_{j,0}^{-}]=\delta_{ij} h_{i, r} \ \ \text{for $r=0,1$},
\]
\[ [\tilde{h}_{i,1}, x_{j,0}^{+}]= a_{ij}\left(x_{j,1}^{+}-m_{ij}\dfrac{\varepsilon_1 - \varepsilon_2}{2} x_{j, 0}^{+}\right),\quad	[\tilde{h}_{i,1}, x_{i,0}^{-}] = -2 x_{i,1}^{-}.
\]
\end{lem}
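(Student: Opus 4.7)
The plan is to isolate $x_{j,1}^{-}$ from the last hypothesis and then resolve the resulting bracket by the Jacobi identity. Concretely, from $[\tilde{h}_{j,1}, x_{j,0}^{-}] = -2 x_{j,1}^{-}$ we get
\[
 x_{j,1}^{-} = -\tfrac{1}{2}[\tilde{h}_{j,1}, x_{j,0}^{-}],
\]
so
\[
 [x_{i,0}^{+}, x_{j,1}^{-}] = -\tfrac{1}{2}\bigl[x_{i,0}^{+},[\tilde{h}_{j,1},x_{j,0}^{-}]\bigr].
\]
Applying the Jacobi identity, this equals
\[
 -\tfrac{1}{2}\bigl[[x_{i,0}^{+},\tilde{h}_{j,1}],x_{j,0}^{-}\bigr]-\tfrac{1}{2}\bigl[\tilde{h}_{j,1},[x_{i,0}^{+},x_{j,0}^{-}]\bigr].
\]

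Next I would evaluate the two pieces using the stated hypotheses. The outer bracket $[x_{i,0}^{+},x_{j,0}^{-}]=\delta_{ij}h_{i,0}$ together with $[\tilde{h}_{j,1},h_{i,0}]=0$ makes the second term vanish. For the first term I rewrite $[x_{i,0}^{+},\tilde{h}_{j,1}]=-a_{ji}\bigl(x_{i,1}^{+}-m_{ji}\tfrac{\ve_1-\ve_2}{2}x_{i,0}^{+}\bigr)$ using the hypothesis on $[\tilde{h}_{j,1},x_{i,0}^{+}]$, and then apply $[x_{i,r}^{+},x_{j,0}^{-}]=\delta_{ij}h_{i,r}$ for $r=0,1$ to compute its bracket with $x_{j,0}^{-}$. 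This yields
\[
 [x_{i,0}^{+},x_{j,1}^{-}] = \tfrac{1}{2}a_{ji}\delta_{ij}\Bigl(h_{i,1}-m_{ji}\tfrac{\ve_1-\ve_2}{2}h_{i,0}\Bigr).
\]

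Finally I would split into cases. When $i\neq j$ the prefactor $\delta_{ij}$ kills the expression and we obtain $0=\delta_{ij}h_{i,1}$. When $i=j$ we have $a_{ii}=2$ and $m_{ii}=0$, so the right hand side collapses to $h_{i,1}$, again matching $\delta_{ij}h_{i,1}$. Since there are no hard identities to invoke beyond the Jacobi identity and direct substitution, there is no real obstacle; the only point requiring care is the sign and coefficient tracking in the first bracket, and making sure that the $[\tilde{h}_{j,1},h_{i,0}]=0$ hypothesis is what eliminates the second Jacobi term so that the argument does not secretly require the full $[h_{i,1},h_{j,0}]=0$ (which is not among the assumptions of the lemma).
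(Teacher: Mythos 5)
Your proof is correct: writing $x_{j,1}^{-}=-\tfrac12[\tilde{h}_{j,1},x_{j,0}^{-}]$, applying the Jacobi identity, killing the second term via $[\tilde{h}_{j,1},h_{i,0}]=0$, and evaluating the first term with $[x_{i,r}^{+},x_{j,0}^{-}]=\delta_{ij}h_{i,r}$ (using $a_{ii}=2$, $m_{ii}=0$, and $\delta_{ij}$ for $i\neq j$) gives exactly $\delta_{ij}h_{i,1}$. The paper omits this verification as straightforward, and your argument is precisely the intended direct computation, with the signs and the role of each hypothesis tracked correctly.
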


For each $\alpha \in \bbC$, we define an algebra automorphism $\tau_{\alpha}$ of $\affY$ by
\[
	x_{i,r}^{\pm} \mapsto \sum_{s=0}^r \dbinom{r}{s} \alpha^{r-s} x_{i,s}^{\pm}, \quad h_{i,r} \mapsto \sum_{s=0}^r \dbinom{r}{s} \alpha^{r-s} h_{i,s}.
\]
We can verify that $\tau_{\alpha}$ is well-defined in the same way as an automorphism $\rho$ given in \cite[Lemma~3.5]{MR2199856}.
The definition of $\rho$ will be recalled in \ref{subsection:cyclic}.
Let $\mu$ be the algebra anti-isomorphism from $Y_{\ve_1,\ve_2}(\affsl)$ to $Y_{-\ve_2,-\ve_1}(\affsl)$ defined by
\[
	x_{i,r}^{\pm} \mapsto - x_{i,r}^{\pm}, \quad h_{i,r} \mapsto - h_{i,r}.
\]
It is easy to see that the assignment respects the defining relations.

\subsection{Affine Lie algebra $\hat{\gl}_N$}\label{subsection:affine_Lie_algebra}

Let $\gl_N$ be the complex general linear Lie algebra consisting of $N \times N$ matrices.
We denote by $E_{i,j}$ the matrix unit with $(i,j)$-th entry $1$.
The indices $i,j$ of $E_{i,j}$ are regarded as elements of $\bbZ/N\bbZ$.
The transpose of an element $X$ of $\gl_N$ is denoted by ${}^t X$.
Put $\bfid = \sum_{i=1}^N E_{i,i}$ and $\mathfrak{a} = \bbC \bfid$.
We have a decomposition
\[
	\gl_N = \fraksl_N \oplus \mathfrak{a}.
\]

Let $\hat{\fraksl}_N = \fraksl_N \otimes \bbC[t,t^{-1}] \oplus \bbC c$ be the affine Lie algebra whose Lie bracket is given by
\[
	[X \otimes t^{r}, Y \otimes t^{s}] = [X,Y] \otimes t^{r+s} + r \delta_{r+s,0} \tr(XY) c, \quad \text{$c$ is central}.
\]
Let $\hat{\mathfrak{a}} = \mathfrak{a} \otimes \bbC[t,t^{-1}] \oplus \bbC c'$ be the Heisenberg Lie algebra whose Lie bracket is given by
\[
	[\bfid \otimes t^{r}, \bfid \otimes t^{s}] = r \delta_{r+s,0} N c', \quad \text{$c'$ is central}.
\]
%change!!
%Let $\hat{\gl}_N = \gl_N \otimes \bbC[t,t^{-1}] \oplus \bbC c$ be the affine Lie algebra whose Lie bracket is given by
%\[
%	[X \otimes t^{r}, Y \otimes t^{s}] = [X,Y] \otimes t^{r+s} + r \delta_{r+s,0} \tr(XY) c, \quad \text{$c$ is central}.
%\]
Define two kinds of the affine Lie algebra $\hat{\gl}_N$ by
%\begin{align*}
%	\hat{\gl}_N^{(+)} &= \Big( \hat{\fraksl}_N \oplus \hat{\mathfrak{a}} \Big) / (c'- (c+N)),\\
%	\hat{\gl}_N^{(-)} &= \Big( \hat{\fraksl}_N \oplus \hat{\mathfrak{a}} \Big) / (c'- (c-N)).
%\end{align*}
\[
	\hat{\gl}_N^{(+)} = \Big( \hat{\fraksl}_N \oplus \hat{\mathfrak{a}} \Big) / (c'- (c+N)), \quad \hat{\gl}_N^{(-)} = \Big( \hat{\fraksl}_N \oplus \hat{\mathfrak{a}} \Big) / (c'- (c-N)).
\]
Then we have
\[
	[E_{i,i} \otimes t^{r}, E_{j,j} \otimes t^{s}] = r \delta_{r+s,0} (\delta_{i,j}c + 1) \quad \text {in} \quad \hat{\gl}_N^{(+)} 
\]
and
\[
	[E_{i,i} \otimes t^{r}, E_{j,j} \otimes t^{s}] = r \delta_{r+s,0} (\delta_{i,j}c - 1) \quad \text {in} \quad \hat{\gl}_N^{(-)}.
\]
In the sequel, the symbol $\hat{\gl}_N$ denotes both $\hat{\gl}_N^{(+)}$ and $\hat{\gl}_N^{(-)}$ unless otherwise stated.

We denote the element $X \otimes t^s$ by $X(s)$.
We set
\begin{gather*}
	x_0^+ = E_{N,1}(1), \quad x_0^{-} = E_{1,N}(-1), \quad h_0 = E_{N,N} - E_{1,1} + c,\\
	x_{i}^{+} = E_{i,i+1}, \quad x_i^{-} = E_{i+1,i}, \quad h_i = E_{i,i}-E_{i+1,i+1} \ \ (i \neq 0).%,\quad \bfid = \sum_{i=1}^N E_{i,i}.
\end{gather*}
Let $\affnil_{\pm}$ be the Lie subalgebras of $\hat{\gl}_N$ generated by $x_{i}^{\pm}$ ($i \in \bbZ/N\bbZ$) and $\bfid(s)$ ($\pm s > 0$).
That is,
\[
	\affnil_{+} = \bigoplus_{\substack{i < j\\ s \geq 0}} \bbC E_{i,j}(s) \oplus \bigoplus_{\substack{i \geq j\\ s > 0}} \bbC E_{i,j}(s), \quad \affnil_{-} = \bigoplus_{\substack{i > j\\ s \leq 0}} \bbC E_{i,j}(s) \oplus \bigoplus_{\substack{i \leq j\\ s < 0}} \bbC E_{i,j}(s).
\]
Let $\affCar$ be the Cartan subalgebra generated by $h_i$ ($i \in \bbZ/N\bbZ$) and $\bfid$.

Let $\omega_U$ be the algebra anti-automorphism of ${U}(\hat{\gl}_N)$ defined by $\omega_U(X(s))= {}^{t}X(-s)$ and $\omega_U(c)=c$.
We denote by $\mu_U$ the algebra anti-isomorphism from ${U}(\hat{\gl}_N^{(+)})$ to ${U}(\hat{\gl}_N^{(-)})$ induced from the assignment $X$ in $\hat{\gl}_N^{(+)}$ $\mapsto$ $-X$ in $\hat{\gl}_N^{(-)}$.
The restriction of $\mu_U$ to $\affsl$ gives an algebra anti-automorphism of $\affsl$.
The anti-isomorphism $\mu$ defined in the previous subsection is an extension of the restriction of $\mu_U$ to $\affsl$.

We define gradings of $\affnil_{\pm}$ by $\deg X(s) = s$. 
Then $U(\affnil_{\pm})$ become graded algebras.
We denote by $U(\affnil_{\pm})[s]$ the degree $s$ components.
Let us introduce completions of $U(\hat{\gl}_N^{(+)})$ and $U(\hat{\gl}_N^{(-)})$.
\begin{dfn}
We define completions $U(\hat{\gl}_N)_{{\rm comp},+}$ and $U(\hat{\gl}_N)_{{\rm comp},-}$ of $U(\hat{\gl}_N^{(+)})$ and $U(\hat{\gl}_N^{(-)})$, respectively, as follows:
\[
	U(\hat{\gl}_N)_{{\rm comp},+} = \bigoplus_{k \in \bbZ}\, \prod_{\substack{r, s \geq 0\\ s-r = k}} \left( U(\affnil_-)[-r] \otimes U(\affCar) \otimes U(\affnil_+)[s] \right),
\]
\[
	U(\hat{\gl}_N)_{{\rm comp},-} = \bigoplus_{k \in \bbZ}\, \prod_{\substack{r, s \geq 0\\ r-s = k}} \left( U(\affnil_+)[r] \otimes U(\affCar) \otimes U(\affnil_-)[-s] \right).
\]
\end{dfn}
Both $U(\hat{\gl}_N)_{{\rm comp},+}$ and $U(\hat{\gl}_N)_{{\rm comp},-}$ have natural algebra structures which contain $U(\hat{\gl}_N^{(+)})$ and $U(\hat{\gl}_N^{(-)})$ as subalgebras, respectively.
Moreover the anti-automorphism $\omega_U$ of $U(\hat{\gl}_N)$ extends to the completions and the anti-isomorphism
%\[
$\mu_U \colon {U}(\hat{\gl}_N^{(+)}) \to {U}(\hat{\gl}_N^{(-)})$
%\]
extends to an algebra anti-isomorphism
\[
	\mu_U \colon U(\hat{\gl}_N)_{{\rm comp},+} \to U(\hat{\gl}_N)_{{\rm comp},-}.
\]

\section{Evaluation map}
\subsection{Main theorem}
From now on, we evaluate the central element $c$ at a complex number, which is denoted also by the same letter $c$.

%Guay's version
\begin{thm}[Guay~\cite{MR2323534}]\label{thm:Guay}
Assume $\hbar c = N \ve_2$.
Then there exists an algebra homomorphism $\ev \colon \affY \to U(\hat{\gl}_N)_{{\rm comp},-}$ uniquely determined by 
\begin{gather*}
	\ev(x_{i,0}^{+}) = x_{i}^{+}, \quad \ev(x_{i,0}^{-}) = x_{i}^{-},\quad \ev(h_{i,0}) = h_{i},
\end{gather*}
\begin{gather*}
	\ev(x_{i,1}^{+}) = \begin{cases}
		(1+ N \varepsilon_2) x_{0}^{+} + \hbar \displaystyle\sum_{s \geq 0} \sum_{k=1}^N E_{k,1}(s+1) E_{N,k}(-s) \text{ if $i = 0$},\\
		(1+ i \varepsilon_2) x_{i}^{+} + \hbar  \displaystyle\sum_{s \geq 0} \Big( \sum_{k=1}^i E_{k,i+1}(s) E_{i,k}(-s) + \sum_{k=i+1}^N E_{k,i+1}(s+1) E_{i,k}(-s-1) \Big) \\
		\qquad\qquad\qquad\qquad\qquad\qquad\qquad\qquad\qquad\qquad\qquad\qquad\qquad\qquad\text{ if $i \neq 0$},
	\end{cases}
\end{gather*}
\begin{gather*}
	\ev(x_{i,1}^-) = \begin{cases}
		(1+ N \varepsilon_2) x_{0}^{-} + \hbar \displaystyle\sum_{s \geq 0} \sum_{k=1}^N E_{k,N}(s) E_{1,k}(-s-1) \text{ if $i = 0$},\\
		(1+ i \varepsilon_2) x_{i}^{-} + \hbar \displaystyle\sum_{s \geq 0} \Big( \sum_{k=1}^i E_{k,i}(s) E_{i+1,k}(-s) + \sum_{k=i+1}^N E_{k,i}(s+1) E_{i+1,k}(-s-1) \Big) \\
		\qquad\qquad\qquad\qquad\qquad\qquad\qquad\qquad\qquad\qquad\qquad\qquad\qquad\qquad\text{ if $i \neq 0$},
	\end{cases}
\end{gather*}
\begin{gather*}
	\ev(h_{i,1}) = \begin{cases}
		(1+ N \varepsilon_2) h_{0} - \hbar E_{N,N} (E_{1,1}-c) \\
		\quad + \hbar \displaystyle\sum_{s \geq 0} \sum_{k=1}^{N} \Big( E_{k,N}(s) E_{N,k}(-s) - E_{k,1}(s+1) E_{1,k}(-s-1) \Big) \text{ if $i = 0$},\\
		\\
		(1+ i \varepsilon_2) h_{i} - \hbar E_{i,i}E_{i+1,i+1} \\
		\quad+ \hbar \displaystyle\sum_{s \geq 0} \Big( \sum_{k=1}^{i} E_{k,i}(s) E_{i,k}(-s) + \displaystyle\sum_{k=i+1}^{N}  E_{k,i}(s+1) E_{i,k}(-s-1) \\
		\qquad\qquad\quad - \displaystyle\sum_{k=1}^{i}E_{k,i+1}(s) E_{i+1,k}(-s) - \displaystyle\sum_{k=i+1}^{N} E_{k,i+1}(s+1) E_{i+1,k}(-s-1) \Big)\\
		\qquad\qquad\qquad\qquad\qquad\qquad\qquad\qquad\qquad\qquad\qquad\qquad\qquad\qquad \text{ if $i \neq 0$}.
	\end{cases}
\end{gather*}
\end{thm}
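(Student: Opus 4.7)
The plan is to exploit the finite presentation in Theorem~\ref{thm:GNW}: the affine Yangian is generated by its degree $\le 1$ symbols subject to an explicit finite list of relations, so $\ev$ is determined by its values on these generators, and well-definedness reduces to checking each relation on the proposed images. Lemmas~\ref{lem:reduce1} and~\ref{lem:reduce2} further trim the list: the identities $[h_{i,0}, x_{j,1}^{\pm}]=\pm a_{ij} x_{j,1}^{\pm}$ and $[x_{i,0}^{+}, x_{j,1}^{-}]=\delta_{ij} h_{i,1}$ need not be verified directly once the corresponding identities built from $\tilde{h}_{i,1}$ have been established.

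All degree zero relations are automatic, since the assignment $x_{i,0}^{\pm} \mapsto x_i^{\pm}$, $h_{i,0}\mapsto h_i$ is the canonical embedding $U(\affsl) \hookrightarrow U(\hat{\gl}_N) \hookrightarrow U(\hat{\gl}_N)_{{\rm comp},-}$ and the Chevalley--Serre relations of $\affsl$ are built into $\hat{\gl}_N$. The substantive content is the list of degree one relations: $[\tilde{h}_{i,1}, h_{j,0}]=0$, $[\tilde{h}_{i,1}, x_{i,0}^{\pm}]=\pm 2 x_{i,1}^{\pm}$, the Drinfeld identity $[\tilde{h}_{i,1}, x_{j,0}^{\pm}]=\pm a_{ij}\bigl(x_{j,1}^{\pm} - m_{ij}\tfrac{\ve_1-\ve_2}{2} x_{j,0}^{\pm}\bigr)$, the quadratic cross relation $[x_{i,1}^{\pm}, x_{j,0}^{\pm}]-[x_{i,0}^{\pm}, x_{j,1}^{\pm}]=\pm a_{ij}\tfrac{\hbar}{2}\{x_{i,0}^{\pm}, x_{j,0}^{\pm}\} - m_{ij}\tfrac{\ve_1-\ve_2}{2}[x_{i,0}^{\pm}, x_{j,0}^{\pm}]$, and the mixed bracket $[x_{i,1}^{+}, x_{j,0}^{-}]=\delta_{ij} h_{i,1}$. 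Each of these expands as a bilinear expression in the matrix units $E_{a,b}(s)$; using the $\hat{\gl}_N$ bracket $[X(r), Y(s)]=[X,Y](r+s)+r\delta_{r+s,0}\tr(XY)c$ inside the completion, the double sums telescope by standard manipulations of matrix units and the prescribed right-hand sides emerge.

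The main difficulty is bookkeeping rather than any single subtle step: the images involve infinite sums over $s\ge 0$, and showing that cross terms cancel cleanly requires organizing the computation by the index $k$ running over $1,\dots,N$, with special care at the boundary between $k\le i$ and $k>i$ that distinguishes the two summands in the formulas for $\ev(x_{i,1}^{\pm})$ and $\ev(h_{i,1})$. I expect the hypothesis $\hbar c = N\ve_2$ to enter precisely at the node $i=0$: when two matrix units with opposite degrees meet, the central cocycle $r\delta_{r+s,0}\tr(XY)c$ contributes a term proportional to $c$, and balancing this against the jump of the leading scalar from $(1+(N-1)\ve_2)$ at $i=N-1$ to $(1+N\ve_2)$ at $i=0$ forces exactly the identification $\hbar c = N \ve_2$. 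For $i\ne 0$ no such constraint arises, so the assumption is sharp at precisely this one node.
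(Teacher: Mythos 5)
Your overall strategy is the same as the paper's: reduce to the finite presentation of Theorem~\ref{thm:GNW}, use Lemmas~\ref{lem:reduce1} and~\ref{lem:reduce2} to trim the relations, note that the degree-zero relations hold automatically, and verify the degree-one relations by matrix-unit computations in the completion, with the hypothesis $\hbar c = N\ve_2$ entering at the affine node. However, as written the proposal has a concrete gap: your checklist of relations omits $[h_{i,1},h_{j,1}]=0$, i.e.\ the commutativity of the proposed images $\ev(h_{i,1})$ and $\ev(h_{j,1})$ (the relation $[h_{i,r},h_{j,s}]=0$ with $r=s=1$ in Theorem~\ref{thm:GNW}). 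This is not subsumed by the $\tilde h_{i,1}$-relations you list, and it is not a routine telescoping: each $\ev(h_{i,1})$ is an infinite sum of quadratic terms $E_{k,i}(s)E_{i,k}(-s)$ etc., and the commutator of two such sums produces cubic terms in shifted matrix units whose cancellation has to be organized carefully (in the paper this is the longest part of the proof, a separate lemma computing $[A_i,A_j]$, $[A_i,B_j]$, $[B_i,A_j]$, $[B_i,B_j]$ for the four families of quadratic sums and showing the total vanishes). Without this check the well-definedness argument is incomplete.

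Two smaller points. First, your prediction of where $\hbar c = N\ve_2$ is needed is close but imprecise: in the actual verification it appears in the mixed relations $[\tilde h_{1,1},x_{0}^{+}]$ and $[\tilde h_{0,1},x_{1}^{+}]$ (through the term $E_{N,N}(E_{1,1}-c)$ in $\ev(h_{0,1})$ and the degree shift at node $0$), and again in the quadratic relation for $(i,j)=(0,1)$, rather than by balancing the scalar jump between $i=N-1$ and $i=0$. Second, the remaining verifications ($[x_{i,1}^{+},x_{j,0}^{-}]=\delta_{ij}h_{i,1}$, the Drinfeld identity, the quadratic cross relation) are indeed of the bookkeeping type you describe, and the work for the $-$ sign can be halved by transporting the $+$ computations through the anti-automorphism $\omega_U$, since $\omega_U(\ev(x_{i,1}^{+}))=\ev(x_{i,1}^{-})$ and $\omega_U(\ev(h_{i,1}))=\ev(h_{i,1})$; your proposal does not use this, but that is only a matter of convenience.
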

The formulas in the theorem are deduced from those for $\ev(H_{i,1})$ $(i \neq 0)$ where $H_{i,1}=h_{i,1} + (i/2)(\ve_1-\ve_2)h_{i,0}$, given in \cite[Section~6, pp.\ 462--463]{MR2323534}.
In \cite{MR2323534}, computations for the well-definedness are omitted.
Moreover we need the condition $\hbar c=N\ve_2$ to verify that those formulas preserve the defining relations of $\affY$ and this condition is not mentioned in the paper.
By these reasons, we give a detailed proof of the existence of the evaluation map $\ev$ as one of the main results of the present paper.  

\begin{rem}
It is straightforward to deduce the following formula for $\ev(\tilde{h}_{i,1})$: 
\begin{equation*}
	\begin{split}
	\ev(\tilde{h}_{0,1}) &= (1+ N \varepsilon_2) h_{0} - \dfrac{\hbar}{2} (E_{N,N}^2 +  (E_{1,1}-c)^2) \\
		&\qquad+ \hbar \sum_{s \geq 0} \sum_{k=1}^{N} \Big( E_{k,N}(s) E_{N,k}(-s) - E_{k,1}(s+1) E_{1,k}(-s-1) \Big),
	\end{split}
\end{equation*}
\begin{equation*}
	\begin{split}
	&\ev(\tilde{h}_{i,1}) = (1+ i \varepsilon_2) h_{i} - \dfrac{\hbar}{2} (E_{i,i}^2+E_{i+1,i+1}^2) \\
		&\qquad+ \hbar \sum_{s \geq 0} \Big( \sum_{k=1}^{i} E_{k,i}(s) E_{i,k}(-s) + \displaystyle\sum_{k=i+1}^{N}  E_{k,i}(s+1) E_{i,k}(-s-1) \\
		&\qquad\qquad\qquad - \displaystyle\sum_{k=1}^{i}E_{k,i+1}(s) E_{i+1,k}(-s) - \displaystyle\sum_{k=i+1}^{N} E_{k,i+1}(s+1) E_{i+1,k}(-s-1) \Big)\ \ (i \neq 0).
	\end{split}
\end{equation*}
\end{rem}

\begin{proof}
By the definition of $\ev$, the elements $\ev(x_{i,0}^{\pm})$, $\ev(h_{i,0})$ ($i \in \bbZ/N\bbZ$) automatically satisfy the defining relations of $\affsl$. 
By Theorem~\ref{thm:GNW}, Lemma~\ref{lem:reduce1} and \ref{lem:reduce2}, it is enough to show the following:
%check
\begin{equation}
[\ev(h_{i,1}), h_{j}] = 0, \label{eq:mainthmHH1}
\end{equation}
\begin{equation}
[\ev(h_{i,1}), \ev(h_{j,1})] = 0, \label{eq:mainthmHH2}
\end{equation}
\begin{equation}
[\ev(x_{i,1}^{+}), x_{j}^{-}] = \delta_{ij} \ev(h_{i, 1}), \label{eq:mainthm+-}
\end{equation}
\begin{equation}
[\ev(\tilde{h}_{i,1}), x_{j}^{\pm}] = \pm a_{ij} \left( \ev(x_{j,1}^{\pm})-m_{ij}\dfrac{\varepsilon_1 - \varepsilon_2}{2} x_{j}^{\pm} \right), \label{eq:mainthmHX}
\end{equation}
\begin{equation}
[\ev(x_{i, 1}^{\pm}), x_{j}^{\pm}] - [x_{i}^{\pm}, \ev(x_{j, 1}^{\pm})] = \pm a_{ij}\dfrac{\hbar}{2} \{x_{i}^{\pm}, x_{j}^{\pm}\} - m_{ij} \dfrac{\varepsilon_1 - \varepsilon_2}{2} [x_{i}^{\pm}, x_{j}^{\pm}]. \label{eq:mainthmXX2}
\end{equation}
Moreover the relations (\ref{eq:mainthmHX}), (\ref{eq:mainthmXX2}) for $-$ are deduced from those for $+$ by applying the anti-automorphism $\omega_U$ since we have $\omega_U(\ev(h_{i,1}))=\ev(h_{i,1})$ and $\omega_U(\ev(x_{i,1}^+))=\ev(x_{i,1}^{-})$.

Let us start to check the relations.
We use the symbol $\delta(P)$ for $1$ if $P$ is true, $0$ otherwise. 
The relation~(\ref{eq:mainthmHH1}) clearly holds since $\ev(h_{i,1})$ has only weight $0$ terms.

We show (\ref{eq:mainthmHH2}).
We may assume $i < j$.
Further assume $i \neq 0$.
The proof for the case $i=0$ is similar.
Put
\begin{gather*}
	A_i = \sum_{r \geq 0} \sum_{k=1}^i E_{k,i}(r) E_{i,k}(-r), \quad	B_i = \sum_{r \geq 0} \sum_{k=i+1}^N E_{k,i}(r+1) E_{i,k}(-r-1), \\
	C_i = \sum_{r \geq 0} \sum_{k=1}^i E_{k,i+1}(r) E_{i+1,k}(-r), \quad D_i = \sum_{r \geq 0} \sum_{k=i+1}^N E_{k,i+1}(r+1) E_{i+1,k}(-r-1).
\end{gather*}
Since we have $[\ev(h_{i,1}),E_{k,k}]=0$ for all $k$, it is enough to show
\[
	[A_i+B_i-C_i-D_i,\, A_j+B_j-C_j-D_j] = 0.
\]
We give a proof for
\begin{equation}
	[A_i,A_j] + [A_i,B_j] + [B_i,A_j] + [B_i,B_j] = 0. \label{eq:vanish}
\end{equation}
We can similarly prove
\begin{gather*}
	[A_i,C_j] + [A_i,D_j] + [B_i,C_j] + [B_i,D_j] = 0,\\
	[C_i,A_j] + [C_i,B_j] + [D_i,A_j] + [D_i,B_j] = 0,\\
	[C_i,C_j] + [C_i,D_j] + [D_i,C_j] + [D_i,D_j] = 0.
\end{gather*} 
The identity (\ref{eq:vanish}) follows from next lemma.

%***change!! % [A_i,A_j] inner product term [B_i,A_j] error
\begin{lem}\label{lem:HH2}
We have
\begin{equation*}
	\begin{split}
		[A_i,A_j] &= \displaystyle\sum_{\substack{r,s \geq 0\\r>s}} \sum_{k=1}^i \Big( E_{k,i}(r)E_{i,j}(s-r)E_{j,k}(-s) - E_{k,j}(s)E_{j,i}(r-s)E_{i,k}(-r) \Big)\\
		&\quad + \sum_{r > 0} r \Big( E_{i,i}(r)E_{j,j}(-r) - E_{j,j}(r)E_{i,i}(-r) \Big),% \quad [A_i,B_j] = 0,
	\end{split}
\end{equation*}
\[
		[A_i,B_j] = 0,
\]
\begin{equation*}
	\begin{split}
		&[B_i,A_j] \\
		&= \sum_{r,s \geq 0} \Bigg( \sum_{k=1}^i \Big(\! -E_{k,i}(r+s+1)E_{i,j}(-r-1)E_{j,k}(-s) + E_{k,j}(s)E_{j,i}(r+1)E_{i,k}(-r-s-1) \Big) \\
		&\qquad\qquad + \sum_{k=j+1}^N \Big(\! -E_{k,i}(r+1)E_{i,j}(s)E_{j,k}(-r-s-1) + E_{k,j}(r+s+1)E_{j,i}(-s)E_{i,k}(-r-1) \Big) \Bigg)\\
		&\quad + \sum_{r > 0} r \Big(\! -E_{i,i}(r)E_{j,j}(-r) + E_{j,j}(r)E_{i,i}(-r) \Big),
	\end{split}
\end{equation*}
\begin{equation*}
	\begin{split}
		&[B_i,B_j] \\
		&= \displaystyle\sum_{\substack{r,s \geq 0\\r \leq s}} \sum_{k=j+1}^N \Big( E_{k,i}(r+1)E_{i,j}(s-r)E_{j,k}(-s-1) - E_{k,j}(s+1)E_{j,i}(r-s)E_{i,k}(-r-1) \Big).
	\end{split}
\end{equation*} 
\end{lem}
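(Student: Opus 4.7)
My plan is a direct computation via the Leibniz rule
\[
[XY, ZW] = X[Y,Z]W + [X,Z]YW + ZX[Y,W] + Z[X,W]Y
\]
applied to each of the four bilinear expressions, together with the affine commutation relation
\[
[E_{a,b}(r), E_{c,d}(s)] = \delta_{bc} E_{a,d}(r+s) - \delta_{ad} E_{c,b}(r+s) + r\,\delta_{r+s,0}\,\delta_{ad}\delta_{bc}\,c.
\]
For each commutator I enumerate the surviving contractions -- the matchings of Kronecker deltas -- compatible with the index constraints $1 \leq k \leq i$ for $A_i$, $i{+}1 \leq k \leq N$ for $B_i$, $1 \leq l \leq j$ for $A_j$, $j{+}1 \leq l \leq N$ for $B_j$, together with the standing assumption $i < j$.

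The vanishing of $[A_i, B_j]$ is immediate from the index ranges alone: all row/column indices of matrix units in $A_i$ lie in $\{1,\dots,i\}$ and those of $B_j$ in $\{j+1,\dots,N\}\cup\{j\}$, and these two sets are disjoint, so no Kronecker delta in any of the four Leibniz summands can fire. For the three nonzero commutators the surviving contractions produce triple products of matrix units, and after possibly one further application of the commutation relation to reorder the middle two factors (which may introduce subsidiary two-factor terms), every contribution takes one of the ``cyclic'' shapes $E_{k,i}(\bullet)E_{i,j}(\bullet)E_{j,k}(\bullet)$ or $E_{k,j}(\bullet)E_{j,i}(\bullet)E_{i,k}(\bullet)$ appearing in the lemma. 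The two-factor subsidiary terms produced by the reordering then cancel in pairs between different Leibniz summands.

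The key combinatorial step is to reparametrize the double sums so that, for each cyclic shape, the three arguments of the matrix units agree between the different contributing Leibniz summands. Pairs of contributions then cancel on the overlap of their summation regions, leaving a residual sum over a restricted range. For $[A_i, A_j]$ this range is $r > s \geq 0$. For $[B_i, B_j]$ the $+1$ shifts built into $B_i, B_j$ flip the inequality to give $r \leq s$. For $[B_i, A_j]$ three contraction types ($l = i$, $k = j$, $l = k$) combine so that the intermediate index range $\{i+1,\dots,j\}$ cancels out, leaving exactly the two residual ranges $\{1,\dots,i\}$ and $\{j+1,\dots,N\}$ that appear in the stated formula. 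In every case the central-charge contributions from the last term of the affine commutation relation must also be checked to vanish, which follows either from the incompatibility of the pair $\delta_{ad}\delta_{bc}$ with the disjoint index intervals, or from the emptiness of the condition $r+s=0$ when the shifted summation variables are positive.

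The main obstacle is the purely combinatorial bookkeeping: each nontrivial commutator requires a careful enumeration of contractions, a reordering of triple products into a common cyclic shape, a tracking of the subsidiary two-factor terms so produced, and a final reparametrization of double sums to reveal the strict/non-strict range conditions. The conceptual content is minimal; the risk lies entirely in sign, range, shift, and ordering errors.
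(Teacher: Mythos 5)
Your proposal is correct and follows essentially the same route as the paper: a direct Leibniz-rule expansion using the $\hat{\gl}_N$ bracket, enumeration of the surviving contractions under $i<j$ (with $[A_i,B_j]=0$ by disjointness of index ranges and central terms killed by either incompatible deltas or nonzero total degree), regrouping into the two cyclic shapes $E_{k,i}E_{i,j}E_{j,k}$ and $E_{k,j}E_{j,i}E_{i,k}$ with cancellation of the subsidiary two-factor terms, and reparametrization of the double sums to produce the residual ranges $r>s$, $r\le s$, and $\{1,\dots,i\}\cup\{j+1,\dots,N\}$. This matches the paper's computation, so no further comment is needed beyond carrying out the bookkeeping in full.
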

\begin{proof}
We compute $[A_i,A_j]$ as
\begin{equation*}
	\begin{split}
		&[A_i,A_j] = \sum_{r,s \geq 0} \sum_{\substack{1 \leq k \leq i\\ 1 \leq l \leq j}}[E_{k,i}(r) E_{i,k}(-r),E_{l,j}(s) E_{j,l}(-s)]\\
		&= \sum_{r,s \geq 0} \sum_{\substack{1 \leq k \leq i\\ 1 \leq l \leq j}} \Big( [E_{k,i}(r),E_{l,j}(s)]E_{i,k}(-r)E_{j,l}(-s) + E_{k,i}(r) [E_{i,k}(-r),E_{l,j}(s)]E_{j,l}(-s)\\  
		&\qquad\qquad\qquad\quad + E_{l,j}(s)[E_{k,i}(r),E_{j,l}(-s)]E_{i,k}(-r) + E_{l,j}(s)E_{k,i}(r)[E_{i,k}(-r),E_{j,l}(-s)] \Big)\\
		&= \sum_{r,s \geq 0} \sum_{\substack{1 \leq k \leq i\\ 1 \leq l \leq j}} \\
		&\Bigg( \Big( \delta_{l,i}E_{k,j}(r+s)-\delta_{k,j}E_{l,i}(r+s) \Big)E_{i,k}(-r)E_{j,l}(-s)\\
		&\qquad + E_{k,i}(r) \Big( \delta_{l,k}E_{i,j}(s-r) - \delta_{i,j}E_{l,k}(s-r) -r\delta_{r,s}(\delta_{i,j}\delta_{k,l}c-\delta_{i,k}\delta_{j,l}) \Big) E_{j,l}(-s)\\
		&\qquad\qquad + E_{l,j}(s) \Big( \delta_{j,i}E_{k,l}(r-s) - \delta_{k,l}E_{j,i}(r-s) +r\delta_{r,s}(\delta_{i,j}\delta_{k,l}c-\delta_{i,k}\delta_{j,l}) \Big) E_{i,k}(-r) \\
		&\qquad\qquad\qquad + E_{l,j}(s)E_{k,i}(r)\Big( \delta_{j,k}E_{i,l}(-r-s)-\delta_{i,l}E_{j,k}(-r-s) \Big) \Bigg)\\
		&= \sum_{r,s \geq 0}\sum_{k=1}^i \Big( E_{k,j}(r+s)E_{i,k}(-r)E_{j,i}(-s) + E_{k,i}(r)E_{i,j}(s-r)E_{j,k}(-s) \\
		&\qquad\qquad\qquad - E_{k,j}(s)E_{j,i}(r-s)E_{i,k}(-r) - E_{i,j}(s)E_{k,i}(r)E_{j,k}(-r-s) \Big)\\
		&\quad + \sum_{r>0} r\Big( E_{i,i}(r)E_{j,j}(-r) - E_{j,j}(r)E_{i,i}(-r) \Big).
	\end{split}
\end{equation*}
%In the last equality, we change the index $l$ to $k$. 
The sum of the terms containing
\[
	E_{k,i}(a_1), E_{i,j}(a_2), E_{j,k}(a_3) \ (a_1,a_2,a_3 \in \bbZ)
\]
is
\begin{equation*}
	\begin{split}
		&\sum_{r,s \geq 0}\sum_{k=1}^i \Big( E_{k,i}(r)E_{i,j}(s-r)E_{j,k}(-s) - E_{i,j}(s)E_{k,i}(r)E_{j,k}(-r-s) \Big)\\
		&= \sum_{r,s \geq 0}\sum_{k=1}^i \Big( E_{k,i}(r)E_{i,j}(s-r)E_{j,k}(-s) - E_{k,i}(r)E_{i,j}(s)E_{j,k}(-r-s) \\
		&\qquad\qquad\qquad\qquad\qquad\qquad\qquad\qquad\qquad - [E_{i,j}(s),E_{k,i}(r)]E_{j,k}(-r-s) \Big)\\
		&= \sum_{\substack{r,s \geq 0\\r>s}}\sum_{k=1}^i E_{k,i}(r)E_{i,j}(s-r)E_{j,k}(-s) + \sum_{r,s \geq 0}\sum_{k=1}^i E_{k,j}(r+s)E_{j,k}(-r-s).
	\end{split}
\end{equation*}
%
%\begin{equation*}
%	\begin{split}
%		&\sum_{r,s \geq 0}\sum_{k=1}^i \Big( E_{k,i}(r)E_{i,j}(s-r)E_{j,k}(-s) - E_{k,i}(r)E_{i,j}(s)E_{j,k}(-r-s) + E_{k,j}(r+s)E_{j,k}(-r-s)\Big) \\
%		&= \sum_{\substack{r,s \geq 0\\r>s}}\sum_{k=1}^i E_{k,i}(r)E_{i,j}(s-r)E_{j,k}(-s) + \sum_{r,s \geq 0}\sum_{k=1}^i E_{k,j}(r+s)E_{j,k}(-r-s).
%	\end{split}
%\end{equation*}
Similarly the sum of the terms containing
\[
	E_{k,j}(a_1), E_{j,i}(a_2), E_{i,k}(a_3) \ (a_1,a_2,a_3 \in \bbZ)
\]
is
\begin{equation*}
	-\sum_{\substack{r,s \geq 0\\r>s}}\sum_{k=1}^i E_{k,j}(s)E_{j,i}(r-s)E_{i,k}(-r) - \sum_{r,s \geq 0}\sum_{k=1}^i E_{k,j}(r+s)E_{j,k}(-r-s).
\end{equation*}
Hence the first identity holds.

The second identity $[A_i,B_j]=0$ is clear due to the condition $i < j$.

A direct computation shows
\begin{equation*}
	\begin{split}
		&[B_i,A_j] \\
		&= \sum_{r,s \geq 0} \Bigg( \sum_{k=i+1}^N E_{k,j}(r+s+1)E_{i,k}(-r-1)E_{j,i}(-s) - \sum_{k=1}^j E_{k,i}(r+s+1)E_{i,j}(-r-1)E_{j,k}(-s) \\
		&\qquad\qquad\quad + \sum_{k=i+1}^j \Big( E_{k,i}(r+1)E_{i,j}(s-r-1)E_{j,k}(-s) - E_{k,j}(s)E_{j,i}(r-s+1)E_{i,k}(-r-1) \Big) \\
		&\qquad\qquad\qquad +\sum_{k=1}^j E_{k,j}(s)E_{j,i}(r+1)E_{i,k}(-r-s-1) - \sum_{k=i+1}^N E_{i,j}(s)E_{k,i}(r+1)E_{j,k}(-r-s-1) \Bigg).
	\end{split}
\end{equation*}
The sums of the terms containing
\[
	E_{k,i}(a_1), E_{i,j}(a_2), E_{j,k}(a_3) \ (a_1,a_2,a_3 \in \bbZ), \quad E_{k,j}(a_1), E_{j,i}(a_2), E_{i,k}(a_3) \ (a_1,a_2,a_3 \in \bbZ)
\]
are
\begin{equation*}
	\begin{split}
		&\sum_{r,s \geq 0} \Bigg( -\sum_{k=1}^i E_{k,i}(r+s+1)E_{i,j}(-r-1)E_{j,k}(-s) - \sum_{k=j+1}^N E_{k,i}(r+1)E_{i,j}(s)E_{j,k}(-r-s-1)\\
		&\qquad\qquad + \sum_{k=i+1}^N E_{k,j}(r+s+1)E_{j,k}(-r-s-1) \Bigg) - \sum_{r > 0} r E_{i,i}(r)E_{j,j}(-r),
	\end{split}
\end{equation*}
\begin{equation*}
	\begin{split}
		&\sum_{r,s \geq 0} \Bigg( \sum_{k=j+1}^N E_{k,j}(r+s+1)E_{j,i}(-s)E_{i,k}(-r-1) +\sum_{k=1}^i E_{k,j}(s)E_{j,i}(r+1)E_{i,k}(-r-s-1)\\
		&\qquad\qquad - \sum_{k=i+1}^N E_{k,j}(r+s+1)E_{j,k}(-r-s-1) \Bigg) + \sum_{r > 0} r E_{j,j}(r)E_{i,i}(-r),
	\end{split}
\end{equation*}
respectively.
The former follows from
%The sum of the terms containing
%\[
%	E_{k,i}(a_1), E_{i,j}(a_2), E_{j,k}(a_3) \ (a_1,a_2,a_3 \in \bbZ)
%\]
%is
\begin{equation*}
	\begin{split}
		&\sum_{r,s \geq 0} \Bigg( - \sum_{k=1}^j E_{k,i}(r+s+1)E_{i,j}(-r-1)E_{j,k}(-s) + \sum_{k=i+1}^j E_{k,i}(r+1)E_{i,j}(s-r-1)E_{j,k}(-s)\\
		&\qquad\qquad - \sum_{k=i+1}^N E_{i,j}(s)E_{k,i}(r+1)E_{j,k}(-r-s-1) \Bigg)\\
		&= \sum_{r,s \geq 0} \Bigg( - \sum_{k=1}^j E_{k,i}(r+s+1)E_{i,j}(-r-1)E_{j,k}(-s) + \sum_{k=i+1}^j E_{k,i}(r+1)E_{i,j}(s-r-1)E_{j,k}(-s) \\
		&\qquad\qquad - \sum_{k=i+1}^N \Big( E_{k,i}(r+1)E_{i,j}(s)E_{j,k}(-r-s-1) + [E_{i,j}(s),E_{k,i}(r+1)]E_{j,k}(-r-s-1) \Big)\Bigg)\\
%		&= \sum_{r,s \geq 0} \\
%		&\Bigg( -\sum_{k=1}^i E_{k,i}(r+s+1)E_{i,j}(-r-1)E_{j,k}(-s) - \sum_{k=i+1}^j E_{k,i}(r+s+1)E_{i,j}(-r-1)E_{j,k}(-s) \\
%		&\quad + \sum_{k=i+1}^j E_{k,i}(r+1)E_{i,j}(s-r-1)E_{j,k}(-s) \\
%		&\qquad - \sum_{k=i+1}^j E_{k,i}(r+1)E_{i,j}(s)E_{j,k}(-r-s-1) - \sum_{k=j+1}^N E_{k,i}(r+1)E_{i,j}(s)E_{j,k}(-r-s-1)\\
%		&\qquad\quad - \sum_{k=i+1}^N [E_{i,j}(s),E_{k,i}(r+1)]E_{j,k}(-r-s-1) \Bigg)\\
		&= \sum_{r,s \geq 0} \Bigg( -\sum_{k=1}^i E_{k,i}(r+s+1)E_{i,j}(-r-1)E_{j,k}(-s) - \sum_{k=j+1}^N E_{k,i}(r+1)E_{i,j}(s)E_{j,k}(-r-s-1)\\
		&\qquad\qquad - \sum_{k=i+1}^N \Big( \delta_{k,j}E_{i,i}(r+s+1) - E_{k,j}(r+s+1) \Big) E_{j,k}(-r-s-1) \Bigg)\\
%	\end{split}
%\end{equation*}
%hence
%\begin{equation*}
%	\begin{split}
%		&\sum_{r,s \geq 0} \Bigg( -\sum_{k=1}^i E_{k,i}(r+s+1)E_{i,j}(-r-1)E_{j,k}(-s) - \sum_{k=j+1}^N E_{k,i}(r+1)E_{i,j}(s)E_{j,k}(-r-s-1)\\
%		&\quad - E_{i,i}(r+s+1)E_{j,j}(-r-s-1) + \sum_{k=i+1}^N E_{k,j}(r+s+1)E_{j,k}(-r-s-1) \Bigg)\\
		&= \sum_{r,s \geq 0} \Bigg( -\sum_{k=1}^i E_{k,i}(r+s+1)E_{i,j}(-r-1)E_{j,k}(-s) - \sum_{k=j+1}^N E_{k,i}(r+1)E_{i,j}(s)E_{j,k}(-r-s-1)\\
		&\qquad\qquad + \sum_{k=i+1}^N E_{k,j}(r+s+1)E_{j,k}(-r-s-1) \Bigg) - \sum_{r > 0} r E_{i,i}(r)E_{j,j}(-r),
	\end{split}
\end{equation*}
and the latter is similarly obtained.
Hence the third identity holds.

A similar computation shows the last identity.

%
%The sums of the terms containing
%\[
%	E_{k,i}(a)E_{i,j}(b)E_{j,k}(c), \quad E_{k,j}(a)E_{j,i}(b)E_{i,k}(c)
%\]
%false!!
%\begin{multline*}
%		\sum_{r,s \geq 0} \Bigg( -\sum_{k=1}^i E_{k,i}(r+s+1)E_{i,j}(-r-1)E_{j,k}(-s) -\sum_{k=j+1}^N E_{k,i}(r+1)E_{i,j}(s)E_{j,k}(-r-s-1)\\
%		+ \sum_{k=i+1}^N E_{k,j}(r+s+1)E_{j,k}(-r-s-1) \Bigg),
%\end{multline*}
%\begin{multline*}
%		\sum_{r,s \geq 0} \Bigg( \sum_{k=j+1}^N E_{k,j}(r+s+1)E_{j,i}(-s)E_{i,k}(-r-1) +\sum_{k=1}^i E_{k,j}(s)E_{j,i}(r+1)E_{i,k}(-r-s-1)\\
%		- \sum_{k=i+1}^N E_{k,j}(r+s+1)E_{j,k}(-r-s-1) \Bigg),
%\end{multline*}
%respectively.
%Hence the third identity holds.
%
%
\end{proof}
We show (\ref{eq:mainthm+-}).
Assume $i,j \neq 0$.
The proofs for the remaining cases are similar.
We have
\begin{equation*}
	\begin{split}
		&[\ev({x}_{i,1}^+), {x}_{j}^{-}] = (1+i \ve_2)\delta_{ij}h_i \\
		&\quad + \hbar \sum_{s \geq 0} \Big( \sum_{k=1}^i[E_{k,i+1}(s)E_{i,k}(-s), E_{j+1,j}] + \sum_{k=i+1}^N[E_{k,i+1}(s+1)E_{i,k}(-s-1), E_{j+1,j}] \Big).
	\end{split}
\end{equation*}
Since we have
\begin{equation*}
	\begin{split}
		&[E_{k,i+1}(s)E_{i,k}(-s), E_{j+1,j}] \\
		&= \Big(\delta_{i,j}E_{k,j}(s)-\delta_{j,k}E_{j+1,i+1}(s)\Big)E_{i,k}(-s) + E_{k,i+1}(s)\Big(\delta_{j+1,k}E_{i,j}(-s)-\delta_{i,j}E_{j+1,k}(-s)\Big),
	\end{split}
\end{equation*}
we see
\begin{equation*}
	\begin{split}
		&\sum_{k=1}^i[E_{k,i+1}(s)E_{i,k}(-s), E_{j+1,j}] \\
		&=\sum_{k=1}^i \delta_{i,j}E_{k,j}(s)E_{i,k}(-s) -\delta(j \leq i)E_{j+1,i+1}(s) E_{i,j}(-s) \\
		&\quad + \delta(j+1 \leq i)E_{j+1,i+1}(s) E_{i,j}(-s)- \sum_{k=1}^i \delta_{i,j}E_{k,i+1}(s)E_{j+1,k}(-s)\\
		&=\delta_{i,j} \Big( \sum_{k=1}^i \Big( E_{k,i}(s)E_{i,k}(-s) - E_{k,i+1}(s)E_{i+1,k}(-s) \Big) -E_{i+1,i+1}(s) E_{i,i}(-s) \Big) 
	\end{split}
\end{equation*}
and
\begin{equation*}
	\begin{split}
		&\sum_{k=i+1}^N[E_{k,i+1}(s+1)E_{i,k}(-s-1), E_{j+1,j}] \\
		&=\sum_{k=i+1}^N \delta_{i,j}E_{k,j}(s+1)E_{i,k}(-s-1) -\delta(j \geq i+1)E_{j+1,i+1}(s+1) E_{i,j}(-s-1) \\
		&\quad + \delta(j \geq i)E_{j+1,i+1}(s+1) E_{i,j}(-s-1)- \sum_{k=i+1}^N \delta_{i,j}E_{k,i+1}(s+1)E_{j+1,k}(-s-1)\\
		&=\delta_{i,j} \Big( \sum_{k=i+1}^N \Big( E_{k,i}(s+1)E_{i,k}(-s-1) - E_{k,i+1}(s+1)E_{i+1,k}(-s-1) \Big) + E_{i+1,i+1}(s+1) E_{i,i}(-s-1) \Big).
	\end{split}
\end{equation*}
Therefore
\begin{equation*}
	\begin{split}
		&[\ev({x}_{i,1}^+), {x}_{j}^{-}] \\
		&= \delta_{i,j} \Bigg( (1+i \ve_2)h_i -\hbar E_{i+1,i+1} E_{i,i} + \hbar \sum_{s \geq 0} \Bigg( \sum_{k=1}^i \Big( E_{k,i}(s)E_{i,k}(-s) - E_{k,i+1}(s)E_{i+1,k}(-s) \Big) \\
		&\qquad\qquad\qquad\qquad\qquad\qquad + \sum_{k=i+1}^N \Big( E_{k,i}(s+1)E_{i,k}(-s-1) - E_{k,i+1}(s+1)E_{i+1,k}(-s-1) \Big) \Bigg) \Bigg) \\
		&= \delta_{i,j} \ev(h_{i,1}).
	\end{split}
\end{equation*}

We show (\ref{eq:mainthmHX}) for $+$.
Assume $i,j \neq 0$.
The right-hand side of (\ref{eq:mainthmHX}) is
\begin{equation*}
	\begin{split}
		&a_{ij} \Bigg( \Big( 1+i \ve_2 + (\delta_{i+1,j} - \delta_{i-1,j}) \dfrac{\hbar}{2} \Big) x_j^+ \\
		&\qquad\qquad + \hbar\sum_{s \geq 0} \Big( \sum_{k=1}^j E_{k,j+1}(s) E_{j,k}(-s) + \sum_{k=j+1}^N E_{k,j+1}(s+1) E_{j,k}(-s-1) \Big) \Bigg).
	\end{split}
\end{equation*}
A direct computation shows
\begin{equation*}
	\begin{split}
		&[\ev(\tilde{h}_{i,1}), x_{j}^{+}] = (1+ i \varepsilon_2) a_{ij} x_{j}^+ \\
		&\ - \dfrac{\hbar}{2} \Big( \delta_{i,j}(\{E_{i,i+1}, E_{i,i}\} - \{E_{i,i+1}, E_{i+1,i+1}\}) - \delta_{i,j+1}\{E_{i-1,i}, E_{i,i}\} + \delta_{i+1,j}\{E_{i+1,i+2}, E_{i+1,i+1}\} \Big)\\
		&\ \quad + \hbar \Big( \delta_{i,j}(E_{i,i}E_{i,i+1} - E_{i,i+1}E_{i+1,i+1}) - \delta_{i,j+1} E_{i,i}E_{i-1,i} + \delta_{i+1,j} E_{i+1,i+2}E_{i+1,i+1} \Big)\\
		&\ \qquad + \hbar \sum_{s \geq 0} \Bigg( 2\delta_{i,j} \Big( \sum_{k=1}^i E_{k,i+1}(s)E_{i,k}(-s) + \sum_{k=i+1}^N E_{k,i+1}(s+1)E_{i,k}(-s-1)  \Big)\\
		&\quad\qquad\qquad\qquad - \delta_{i,j+1} \Big( \sum_{k=1}^{i-1} E_{k,i}(s)E_{i-1,k}(-s) + \sum_{k=i}^N E_{k,i}(s+1)E_{i-1,k}(-s-1) \Big)\\
		&\qquad\qquad\qquad\qquad - \delta_{i+1,j} \Big( \sum_{k=1}^{i+1} E_{k,i+2}(s)E_{i+1,k}(-s) + \sum_{k=i+2}^N E_{k,i+2}(s+1)E_{i+1,k}(-s-1) \Big) \Bigg). 
	\end{split}
\end{equation*}
Hence the desired identity follows from
\begin{equation*}
	\begin{split}
		&\dfrac{\hbar}{2} \Big(\!-\{E_{i,i+1}, E_{i,i}\} + \{E_{i,i+1}, E_{i+1,i+1}\} + 2E_{i,i}E_{i,i+1} - 2E_{i,i+1}E_{i+1,i+1}\Big) \\
		&=\dfrac{\hbar}{2}\Big([E_{i,i},E_{i,i+1}] + [E_{i+1,i+1},E_{i,i+1}]\Big) = 0
	\end{split}
\end{equation*}
for the case $i=j$,
\begin{equation*}
	\begin{split}
		&\dfrac{\hbar}{2} \Big(\{E_{i-1,i}, E_{i,i}\} - 2E_{i,i}E_{i-1,i}\Big) = \dfrac{\hbar}{2} [E_{i-1,i}, E_{i,i}] = \dfrac{\hbar}{2} x_{i-1}^+
	\end{split}
\end{equation*}
for the case $i=j+1$,
\begin{equation*}
	\begin{split}
		&\dfrac{\hbar}{2} \Big(\!-\{E_{i+1,i+2}, E_{i+1,i+1}\} + 2E_{i+1,i+2}E_{i+1,i+1}\Big) = \dfrac{\hbar}{2} [E_{i+1,i+2}, E_{i+1,i+1}] = -\dfrac{\hbar}{2} x_{i+1}^+
	\end{split}
\end{equation*}
for the case $i=j-1$.
We thus have proved the case $i,j \neq 0$.
The proofs for the remaining cases are similar, but we give computations for the cases $i=1$, $j=0$ and $i=0$, $j=1$ to clarify a role of the condition $\hbar c = N\ve_2$.
First assume $i=1$, $j=0$.
Then the right-hand side of (\ref{eq:mainthmHX}) is
\begin{equation*}
	\begin{split}
		&- \Bigg( \Big( 1+ \ve_2 + N \ve_2 - \dfrac{\hbar}{2} \Big) x_0^+ + \hbar\sum_{s \geq 0} \sum_{k=1}^N E_{k,1}(s+1) E_{N,k}(-s) \Bigg).
	\end{split}
\end{equation*}
The left-hand side is
\begin{equation*}
	\begin{split}
		&[\ev(\tilde{h}_{1,1}), x_{0}^{+}] \\
		&= -(1+ \varepsilon_2) x_{0}^+ + \dfrac{\hbar}{2} \{E_{N,1}(1), E_{1,1}\} -  \hbar c x_0^+ - \hbar E_{1,1}E_{N,1}(1) - \hbar \sum_{s \geq 0} \sum_{k=1}^N E_{k,1}(s+1)E_{N,k}(-s)\\
		&= - \Bigg( (1+ \ve_2)x_0^+  + \hbar c x_0^+ - \dfrac{\hbar}{2} \Big( \{E_{N,1}(1), E_{1,1}\}-2E_{1,1}E_{N,1}(1) \Big) + \hbar\sum_{s \geq 0} \sum_{k=1}^N E_{k,1}(s+1) E_{N,k}(-s) \Bigg). 
	\end{split}
\end{equation*}
Hence the assertion holds.
Next assume $i = 0$, $j = 1$.
Then the right-hand side of (\ref{eq:mainthmHX}) is
\begin{equation*}
	\begin{split}
		&- \Bigg( \Big( 1+ \dfrac{\hbar}{2} \Big) x_1^+ + \hbar\sum_{s \geq 0} \Big( E_{1,2}(s) E_{1,1}(-s) + \sum_{k=2}^N E_{k,2}(s+1) E_{1,k}(-s-1) \Big) \Bigg).
	\end{split}
\end{equation*}
The left-hand side is
\begin{equation*}
	\begin{split}
		&[\ev(\tilde{h}_{0,1}), x_{1}^{+}] \\
		&= -(1+ N \varepsilon_2) x_{1}^+ - \dfrac{\hbar}{2} \Big( \{E_{1,2}, E_{1,1}\}-2E_{1,2}c \Big) - \hbar \sum_{s \geq 0} \sum_{k=1}^N E_{k,2}(s+1) E_{1,k}(-s-1)\\
		&= -\Bigg( (1+ N \varepsilon_2) x_1^+  - \hbar c x_1^+ + \dfrac{\hbar}{2} \Big( \{E_{1,2}, E_{1,1}\} - 2 E_{1,2}E_{1,1} \Big)\\
		&\qquad\qquad\qquad\qquad + \hbar \sum_{s \geq 0} \Big( E_{1,2}(s)E_{1,1}(-s) + \sum_{k=2}^{N} E_{k,2}(s+1)E_{1,k}(-s-1) \Big)\Bigg).		
	\end{split}
\end{equation*}
Hence the assertion holds.

We show (\ref{eq:mainthmXX2}) for $+$.
First assume $i,j \neq 0$.
A direct computation shows
\begin{equation*}
	\begin{split}
		&[\ev(x_{i, 1}^{+}), x_{j}^{+}] = (1+ i \varepsilon_2) [x_{i}^{+},x_{j}^{+}] + \hbar \delta_{i,j} (x_i^+)^2\\
		&\qquad + \hbar \sum_{s \geq 0} \Bigg( \delta_{i+1,j} \Big( \sum_{k=1}^i   E_{k,i+2}(s) E_{i,k}(-s) + \sum_{k=i+1}^N E_{k,i+2}(s+1) E_{i,k}(-s-1) \Big) \\
		&\qquad\qquad\qquad - \delta_{i,j+1} \Big( \sum_{k=1}^i E_{k,i+1}(s)E_{i-1,k}(-s) + \sum_{k=i+1}^N E_{k,i+1}(s+1)E_{i-1,k}(-s-1) \Big)\Bigg).
	\end{split}
\end{equation*}
By swapping $i$ and $j$, and summing up, we obtain
\begin{equation*}
	\begin{split}
		&[\ev(x_{i, 1}^{+}), x_{j}^{+}] - [x_{i}^{+}, \ev(x_{j, 1}^{+})]= (i-j)\varepsilon_2 [x_{i}^{+},x_{j}^{+}] + 2 \hbar \delta_{i,j} (x_{i}^+)^2\\
		&\qquad + \hbar \Big( \delta_{i+1,j} (-x_{i+1}^+ x_{i}^+) + \delta_{i,j+1} (-x_i^+ x_{i-1}^+) \Big) \\
		&= \begin{cases}
			2 \hbar (x_{i}^+)^2 & j=i,\\
			-\ve_2 x_i^+ x_{i+1}^+ - \ve_1 x_{i+1}^+ x_{i}^+ & j=i+1,\\
			-\ve_1 x_i^+ x_{i-1}^+ - \ve_2 x_{i-1}^+ x_{i}^+ & j=i-1,\\
			0 & \text{otherwise}.
		\end{cases}
	\end{split}
\end{equation*}
These are nothing but the desired relations.

Next consider the other cases.
For the case $i=j=0$, we have
\begin{equation*}
	\begin{split}
		&[\ev(x_{0, 1}^{+}), x_{0}^{+}] = \hbar \sum_{s \geq 0} \sum_{k=1}^N [E_{k,1}(s+1) E_{N,k}(-s), E_{N,1}(1)]\\
		&= \hbar \sum_{s \geq 0} \Big(\! -E_{N,1}(s+2) E_{N,1}(-s) + E_{N,1}(s+1) E_{N,1}(-s+1) \Big) = \hbar (x_{0}^+)^2.
	\end{split}
\end{equation*}
For the case $i=0$, $j \neq 0$, we have
\begin{equation*}
	\begin{split}
		&[\ev(x_{0, 1}^{+}), x_{j}^{+}] - [x_{0}^{+}, \ev(x_{j, 1}^{+})]= (N-j)\varepsilon_2 [x_{0}^{+},x_{j}^{+}] - \hbar c \delta_{1,j} [x_{0}^{+},x_{1}^{+}]\\
		&\qquad + \hbar \Big( \delta_{1,j} (-x_{1}^+ x_{0}^+) + \delta_{N,j+1} (-x_0^+ x_{N-1}^+) \Big) \\
		&= \begin{cases}
			-\ve_2 x_0^+ x_{1}^+ - \ve_1 x_{1}^+ x_{0}^+ & j=1,\\
			-\ve_1 x_0^+ x_{N-1}^+ - \ve_2 x_{N-1}^+ x_{0}^+ & j=N-1,\\
			0 & \text{otherwise}.
		\end{cases}
	\end{split}
\end{equation*}
We need the condition $\hbar c = N \ve_2$ for the case $i=0$, $j=1$. 
\end{proof}

%cyclic auto
\subsection{Cyclic automorphism}\label{subsection:cyclic}

We consider algebra automorphisms corresponding to the rotation of the Dynkin diagram.
It is easy to see that the assignment
\[
	x_i^{\pm} \mapsto x_{i-1}^{\pm}, \quad h_i \mapsto h_{i-1}, \quad c \mapsto c, \quad \bfid(s) \mapsto \bfid(s) + \delta_{s,0}c
\]
gives an algebra automorphism $\rho_U$ of $U(\hat{\gl}_N)$.
Guay introduced an analogous automorphism for the affine Yangian.
\begin{prop}[\cite{MR2199856}, Lemma~3.5]
The assignment
\[
	x_{i,r}^{\pm} \mapsto \sum_{s=0}^r \dbinom{r}{s} \ve_2^{r-s} x_{i-1,s}^{\pm}, \quad h_{i,r} \mapsto \sum_{s=0}^r \dbinom{r}{s} \ve_2^{r-s} h_{i-1,s}
\]
gives an algebra automorphism $\rho$ of $\affY$.
\end{prop}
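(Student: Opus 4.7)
The plan is to factor $\rho$ as the composition $\rho = \tau_{\ve_2} \circ \sigma$, where $\sigma$ is the ``naive'' Dynkin diagram rotation defined by $\sigma(x_{i,r}^{\pm}) = x_{i-1,r}^{\pm}$ and $\sigma(h_{i,r}) = h_{i-1,r}$, and $\tau_{\ve_2}$ is the shift automorphism with parameter $\alpha = \ve_2$ already introduced earlier in Section~\ref{sec:affine_Yangian}. Indeed,
\[
	(\tau_{\ve_2} \circ \sigma)(x_{i,r}^{\pm}) = \tau_{\ve_2}(x_{i-1,r}^{\pm}) = \sum_{s=0}^{r} \binom{r}{s} \ve_2^{r-s} x_{i-1,s}^{\pm},
\]
and likewise on $h_{i,r}$, so the composition matches the asserted formula for $\rho$. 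Assuming $\tau_{\ve_2}$ is known to be an algebra automorphism, it then suffices to show that $\sigma$ is one.

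The verification that $\sigma$ respects the defining relations of $\affY$ rests on a single structural observation: both $a_{ij}$ and $m_{ij}$ depend only on the residue class of $i-j \pmod N$, and are therefore invariant under the simultaneous shift $(i,j) \mapsto (i-1,j-1)$. Consequently, each defining relation of $\affY$, be it the $h$-$h$, $x^{+}$-$x^{-}$, $h$-$x$, the two current-type relations, or the Serre relation, is mapped by $\sigma$ to the analogous relation with every index $i$ replaced by $i-1$, which is again a relation in $\affY$. Hence $\sigma$ extends to an algebra endomorphism, and its obvious inverse (the shift $i \mapsto i+1$) shows that it is in fact an automorphism.

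The main nontrivial input is therefore the well-definedness of $\tau_{\ve_2}$, which is exactly the point alluded to in the paragraph where $\tau_{\alpha}$ is introduced. Conceptually, $\tau_{\alpha}$ encodes a shift $u \mapsto u + \alpha$ of the spectral parameter in the generating currents $x_{i}^{\pm}(u)$ and $h_{i}(u)$, under which the Drinfeld-type current relations are manifestly invariant. At the level of generators this reduces to showing that the relations of the form $[\,\cdot_{r+1},\,\cdot_{s}\,] - [\,\cdot_{r},\,\cdot_{s+1}\,] = \cdots$ are preserved under the binomial reshuffling $x_{i,r}^{\pm} \mapsto \sum_{s=0}^{r} \binom{r}{s} \alpha^{r-s} x_{i,s}^{\pm}$. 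I expect this binomial bookkeeping, which is routine but somewhat lengthy, to be the principal obstacle; it is precisely the same computation one would run to verify $\rho$ directly in a single step, and for a self-contained write-up one could alternatively just carry out that direct verification, bypassing the decomposition.
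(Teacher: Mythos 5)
Your argument is correct, but it is not the route the paper takes: the paper gives no proof of this proposition at all, attributing it to \cite[Lemma~3.5]{MR2199856}, where it is established by directly checking the defining relations against the binomially twisted assignment. Your factorization $\rho=\tau_{\ve_2}\circ\sigma$ is a genuinely different and tidier organization of that check. The key observation is sound: in the presentation of Definition~2.1 both $a_{ij}$ and $m_{ij}$ depend only on $i-j$ modulo $N$ (the orientation function $m_{ij}$ is defined uniformly on every edge of the cycle, including the wrap-around one), so each defining relation is sent by the naive shift $\sigma$ to the defining relation with indices lowered by one, and $\sigma$ is an automorphism with inverse the opposite shift; composing with $\tau_{\ve_2}$ then reproduces exactly the stated formula for $\rho$. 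A pleasant by-product of your decomposition is that $\tau_{\alpha}\circ\sigma$ is an automorphism for every $\alpha$, and the specific choice $\alpha=\ve_2$ is singled out not by well-definedness but by external compatibilities, e.g.\ the identity $\rho_U\circ\ev=\ev\circ\rho$ proved in the subsequent proposition and the match with Guay's conventions. The only caveat is logical rather than mathematical: within this paper the well-definedness of $\tau_{\alpha}$ is itself only asserted, ``in the same way as $\rho$'' in \cite{MR2199856}, so your reduction does not remove the binomial verification but relocates it into $\tau_{\ve_2}$, as you acknowledge in your final paragraph. For a self-contained write-up you should therefore include that check of the shift relations (a node-diagonal computation that never mixes the structure constants of different vertices); once it is in place, your argument cleanly separates the trivial relabelling step from the actual computational content, which is a fair improvement over verifying $\rho$ in one stroke.
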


\begin{lem}
We have $\rho_U(E_{ij}(s)) = E_{i-1,j-1}(s+\delta_{i,1}-\delta_{j,1}) + \delta_{s,0}\delta_{i,1}\delta_{j,1}c$.
\end{lem}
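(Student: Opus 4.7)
The plan is to define a linear map $\tilde{\rho}\colon \hat{\gl}_N \to \hat{\gl}_N$ by the prescription
\[
\tilde{\rho}(E_{i,j}(s)) = E_{i-1,j-1}(s+\delta_{i,1}-\delta_{j,1}) + \delta_{s,0}\delta_{i,1}\delta_{j,1}c,\qquad \tilde{\rho}(c)=c
\]
(with indices read mod $N$ and representatives in $\{1,\dots,N\}$), show it is a Lie algebra automorphism of $\hat{\gl}_N$, and verify that its extension to $U(\hat{\gl}_N)$ agrees with $\rho_U$ on the generators $x_i^{\pm}$, $h_i$, $\bfid(s)$, $c$. Since $\rho_U$ is determined by its action on these generators, the two algebra automorphisms coincide, which is the claim.

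For the bracket check, I would expand
\[
[\tilde{\rho}(E_{i,j}(r)),\tilde{\rho}(E_{k,l}(s))] = [E_{i-1,j-1},E_{k-1,l-1}](r+s+\delta_{i,1}-\delta_{j,1}+\delta_{k,1}-\delta_{l,1}) + (\text{central})
\]
using $[E_{i-1,j-1},E_{k-1,l-1}]=\delta_{j,k}E_{i-1,l-1}-\delta_{i,l}E_{k-1,j-1}$. The key observation is that $\delta_{j,k}=1$ forces $\delta_{j,1}=\delta_{k,1}$ and $\delta_{i,l}=1$ forces $\delta_{i,1}=\delta_{l,1}$, so the shifted exponents collapse exactly to those produced by $\tilde{\rho}$ applied to $\delta_{j,k}E_{i,l}(r+s)$ and $\delta_{i,l}E_{k,j}(r+s)$. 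Matching central parts on the hyperplane $r+s=0$ then reduces to the elementary identity
\[
(r+\delta_{i,1}-\delta_{j,1})\delta_{j,k}\delta_{i,l} = r\delta_{j,k}\delta_{i,l} + \delta_{j,k}\delta_{i,1}\delta_{l,1} - \delta_{i,l}\delta_{j,1}\delta_{k,1},
\]
which I would verify case-by-case on $(\delta_{j,k},\delta_{i,l})\in\{0,1\}^2$: when $\delta_{j,k}\delta_{i,l}=1$ both sides equal $r+\delta_{i,1}-\delta_{j,1}$, while in the mixed case $\delta_{j,k}=1$, $\delta_{i,l}=0$ a potentially surviving $\delta_{i,1}\delta_{l,1}$ would force $i=l=1$, contradicting $\delta_{i,l}=0$ (symmetrically for the other mixed case).

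For step 3, direct substitution confirms $\tilde{\rho}(E_{i,i+1})=E_{i-1,i}$ for $i\neq 0,1$, $\tilde{\rho}(E_{1,2})=E_{N,1}(1)$ and $\tilde{\rho}(E_{N,1}(1))=E_{N-1,N}$, giving $\tilde{\rho}(x_i^{+})=x_{i-1}^{+}$ for every $i\in\bbZ/N\bbZ$; the $x_i^{-}$ case is analogous by transposition. For the Cartan part, $\tilde{\rho}(E_{i,i})=E_{i-1,i-1}+\delta_{i,1}c$ yields $\tilde{\rho}(h_0)=h_{N-1}$, $\tilde{\rho}(h_1)=h_0$, $\tilde{\rho}(h_i)=h_{i-1}$ for $i\neq 0,1$, and $\tilde{\rho}(\bfid(s))=\bfid(s)+\delta_{s,0}c$ after telescoping the $\delta_{i,1}c$ corrections across $i=1,\dots,N$. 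The only real obstacle is the bookkeeping of the central correction $\delta_{s,0}\delta_{i,1}\delta_{j,1}c$ in step 2: conceptually it is exactly the coboundary needed to compensate the failure of the underlying shift $E_{i,j}(s)\mapsto E_{i-1,j-1}(s+\delta_{i,1}-\delta_{j,1})$ (realizable as conjugation by $P\cdot\mathrm{diag}(t,1,\dots,1)$ in $GL_N(\bbC[t^{\pm 1}])$) to preserve the affine $2$-cocycle $(X(r),Y(s))\mapsto r\delta_{r+s,0}\tr(XY)$, and handling the four Kronecker-delta cases cleanly is what the proof ultimately comes down to.
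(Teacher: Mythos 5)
Your argument is correct. Note that the paper does not actually prove this lemma: it simply cites \cite[Lemma~4.1]{kodera_braidgroup}, where the formula is obtained in the context of that paper (the natural route there is to produce $E_{i,j}(s)$ from the generators $x_i^{\pm}$, $h_i$, $\bfid(s)$, $c$ by iterated brackets and push $\rho_U$ through, which involves a certain amount of index chasing). You instead take the closed formula as the definition of a map $\tilde{\rho}$ on $\hat{\gl}_N$, verify that it preserves the bracket including the central $2$-cocycle term, and then identify its extension to $U(\hat{\gl}_N)$ with $\rho_U$ by checking agreement on $x_i^{\pm}$, $h_i$, $\bfid(s)$, $c$, which do generate $\hat{\gl}_N$; this is a legitimate and self-contained alternative. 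The two delicate points are exactly the ones you isolate: the collapse of the shifted loop degrees, which works because $\delta_{j,k}=1$ forces $\delta_{j,1}=\delta_{k,1}$ (and similarly for $\delta_{i,l}$), and the matching of central terms, where your identity
\[
(r+\delta_{i,1}-\delta_{j,1})\,\delta_{j,k}\delta_{i,l} \;=\; r\,\delta_{j,k}\delta_{i,l} + \delta_{j,k}\delta_{i,1}\delta_{l,1} - \delta_{i,l}\delta_{j,1}\delta_{k,1}
\]
is the correct bookkeeping; in the mixed cases the would-be extra terms indeed vanish because, e.g., $\delta_{i,1}\delta_{l,1}=1$ would force $i=l$. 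Your generator checks ($\tilde{\rho}(E_{1,2})=E_{N,1}(1)$, $\tilde{\rho}(E_{N,1}(1))=E_{N-1,N}$, $\tilde{\rho}(E_{i,i})=E_{i-1,i-1}+\delta_{i,1}c$, hence $\tilde{\rho}(h_1)=h_0$ and $\tilde{\rho}(\bfid(s))=\bfid(s)+\delta_{s,0}c$) are all accurate, so $\tilde{\rho}$ and $\rho_U$ coincide and the stated formula follows. One cosmetic remark: you only need $\tilde{\rho}$ to be a Lie algebra homomorphism for the identification, not an automorphism, and the interpretation via conjugation by $P\cdot\mathrm{diag}(t,1,\dots,1)$ is a pleasant but inessential aside.
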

\begin{proof}
See \cite[Lemma~4.1]{MR3923494}.
\end{proof}

\begin{prop}
We have $\rho_{U} \circ \ev = \ev \circ \rho$.
\end{prop}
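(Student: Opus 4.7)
Both $\rho_U \circ \ev$ and $\ev \circ \rho$ are algebra homomorphisms from $\affY$ to $U(\hat{\gl}_N)_{{\rm comp},-}$, so by Theorem~\ref{thm:GNW} it suffices to verify the identity on the generators $x_{i,r}^\pm$, $h_{i,r}$ with $i\in\bbZ/N\bbZ$ and $r\in\{0,1\}$. For $r=0$ the claim is immediate: $\rho_U(\ev(x_{i,0}^\pm)) = \rho_U(x_i^\pm) = x_{i-1}^\pm = \ev(x_{i-1,0}^\pm) = \ev(\rho(x_{i,0}^\pm))$, and likewise for $h_{i,0}$, so the only substantial check is at level $r=1$.

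For $r=1$ we have $\rho(x_{i,1}^\pm) = x_{i-1,1}^\pm + \ve_2\, x_{i-1,0}^\pm$ and similarly for $h_{i,1}$. My plan is therefore to expand $\rho_U(\ev(x_{i,1}^\pm))$ using the explicit formula from Theorem~\ref{thm:Guay} together with the shift rule
\[
\rho_U(E_{a,b}(s)) = E_{a-1,b-1}(s+\delta_{a,1}-\delta_{b,1}) + \delta_{s,0}\delta_{a,1}\delta_{b,1}\, c
\]
of the preceding lemma, and then to compare term by term with $\ev(x_{i-1,1}^\pm)+\ve_2 x_{i-1}^\pm$. Consider the generic case $i\neq 0,1$ and the $+$ generator. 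The scalar part contributes $(1+i\ve_2)\rho_U(x_i^+) = (1+i\ve_2)x_{i-1}^+$, which matches $(1+(i-1)\ve_2)x_{i-1}^+ + \ve_2 x_{i-1}^+$. In the quadratic part, the $k=1$ summand of $\sum_{k=1}^i E_{k,i+1}(s)E_{i,k}(-s)$ acquires, under $\rho_U$, a shift in $s$ that moves it into the second ($k=N$) summand of the formula for $\ev(x_{i-1,1}^+)$; the remaining summands shift by $k\mapsto k-1$ without any change in $s$. A careful relabelling of the index $k$ shows that the result is exactly the sum appearing in $\ev(x_{i-1,1}^+)$. The analogous calculations for $x_{i,1}^-$ and $h_{i,1}$ are parallel; that for $h_{i,1}$ also uses $\rho_U(c)=c$ to handle the isolated Cartan term.

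The delicate part is the boundary: the cases where $i$ or $i-1$ equals $0$ (or equivalently $N$), because then the piecewise nature of the formulas for $\ev(x_{i,1}^\pm)$ and $\ev(h_{i,1})$ in Theorem~\ref{thm:Guay} changes between the two sides of the identity, and simultaneously the $\delta_{a,1}$ and $\delta_{b,1}$ corrections in the shift rule for $\rho_U$ become active, producing both $s$-shifts and an extra central contribution $\delta_{s,0}\delta_{a,1}\delta_{b,1}\,c$. The strategy here is to treat $i=1$ (so that $\rho$ sends an $i\neq 0$ formula to the $i=0$ formula) and $i=0$ (sending the $i=0$ formula to an $i\neq 0$ formula with $i-1=N-1$) as separate verifications, carefully book-keeping how the re-indexation $k\mapsto k-1\pmod N$ together with the $s$-shifts converts the ``$s$'' and ``$s+1$'' sums into one another. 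The scalar parts on both sides involve $(1+N\ve_2)$, and matching the extra $\ve_2$ from $\rho(x_{i,1}^\pm)=x_{i-1,1}^\pm+\ve_2 x_{i-1,0}^\pm$ against the shift in the prefactor $(1+i\ve_2)$ is again routine. In the boundary case for $\ev(h_{i,1})$, the central term $c$ produced by the shift rule precisely accounts for the $-\hbar E_{N,N}(E_{1,1}-c)$ versus $-\hbar E_{i,i}E_{i+1,i+1}$ discrepancy between the two branches of the formula. This bookkeeping is the main obstacle, but no new identity beyond the defining relations and the shift rule is required.
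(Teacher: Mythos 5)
Your overall strategy is the same as the paper's: check the identity on the Theorem~\ref{thm:GNW} generators, expand $\rho_U(\ev(\cdot))$ with the shift rule $\rho_U(E_{a,b}(s)) = E_{a-1,b-1}(s+\delta_{a,1}-\delta_{b,1}) + \delta_{s,0}\delta_{a,1}\delta_{b,1}c$, reindex $k\mapsto k-1$, and treat the boundary cases separately. One simplification you miss: since $\affY$ is generated by $x_{i,0}^{\pm}$, $h_{i,0}$ and $x_{i,1}^{+}$ alone (via $h_{i,1}=[x_{i,1}^+,x_{i,0}^-]$ and $[\tilde h_{i,1},x_{i,0}^-]=-2x_{i,1}^-$), the paper only verifies the identity on $x_{i,1}^+$; your separate checks of $x_{i,1}^-$ and $h_{i,1}$ are harmless but unnecessary, and the $h_{i,1}$ boundary case is in fact more delicate than your sketch suggests.

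The genuine gap is your closing claim that ``no new identity beyond the defining relations and the shift rule is required'' and that the scalar matching at the boundary is routine. The verification does \emph{not} close by bookkeeping alone: it needs the standing hypothesis $\hbar c = N\ve_2$ of Theorem~\ref{thm:Guay}, and your sketch never invokes it for the $x^{\pm}$ generators (you assign the central term a role only in the $h_{i,1}$ case). Concretely, take $i=1$. Applying the shift rule to the $k=1$, $s=0$ summand of $\ev(x_{1,1}^+)$ gives
\[
\rho_U\big(E_{1,2}(0)E_{1,1}(0)\big)=E_{N,1}(1)\big(E_{N,N}+c\big),
\]
so $\rho_U(\ev(x_{1,1}^+))$ contains the extra term $\hbar c\,x_0^+$ on top of the prefactor $(1+\ve_2)x_0^+$, whereas $\ev(\rho(x_{1,1}^+))=\ev(x_{0,1}^+)+\ve_2 x_0^+$ carries the prefactor $\big(1+(N+1)\ve_2\big)x_0^+$. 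The mismatch $N\ve_2\,x_0^+$ versus $\hbar c\,x_0^+$ is resolved precisely by $\hbar c=N\ve_2$, and by nothing else; without citing this relation your boundary check fails. (The same relation is also hidden in your $h_{i,1}$ boundary case: besides the quadratic discrepancy you mention, the linear terms differ by $N\ve_2 h_0$ versus central contributions, again requiring $\hbar c=N\ve_2$.) Since highlighting exactly this parameter condition is the point of the paper, your proof should state where it enters.
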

%proof
\begin{proof}
The identity obviously holds for $x_{i,0}^{\pm}$, $h_{i,0}$.
Hence it is enough to show that the identity also holds for $x_{i,1}^+$ since $\affY$ is generated by  $x_{i,0}^{\pm}$, $h_{i,0}$, $x_{i,1}^+$ ($i \in \bbZ/N\bbZ$). 
We show $\rho_U(\ev(x_{i,1}^+)) = \ev(\rho(x_{i,1}^+))$ for $i \neq 0$.
The proof for the case $i = 0$ is similar.
The left-hand side is
\begin{equation*}
	\begin{split}
		&\rho_U(\ev(x_{i,1}^+)) = (1+ i \varepsilon_2) x_{i-1}^+ \\
		&\quad + \hbar \sum_{s \geq 0} \Big( \sum_{k=1}^i \rho_U(E_{k,i+1}(s) E_{i,k}(-s)) + \sum_{k=i+1}^N \rho_U(E_{k,i+1}(s+1) E_{i,k}(-s-1)) \Big).
	\end{split}
\end{equation*}
We have 
\begin{equation*}
	\begin{split}
		&\sum_{k=1}^i \rho_U(E_{k,i+1}(s) E_{i,k}(-s)) =\sum_{k=1}^i  E_{k-1,i}(s+\delta_{k,1}) \Big( E_{i-1,k-1}(-s+\delta_{i,1}-\delta_{k,1}) + \delta_{s,0}\delta_{i,1}\delta_{k,1} c \Big) \\
		&= E_{N,i}(s+1) E_{i-1,N}(-s-1+\delta_{i,1}) + \delta_{s,0}\delta_{i,1} E_{N,1}(1)c + \sum_{k=2}^i E_{k-1,i}(s) E_{i-1,k-1}(-s+\delta_{i,1})
	\end{split}
\end{equation*}
and
\begin{equation*}
	\begin{split}
		&\sum_{k=i+1}^{N} \rho_U(E_{k,i+1}(s+1) E_{i,k}(-s-1)) =\sum_{k=i+1}^{N} E_{k-1,i}(s+1) E_{i-1,k-1}(-s-1+\delta_{i,1}).
	\end{split}
\end{equation*}
This verifies that $\rho_U(\ev(x_{i,1}^+))$ is equal to $\ev(\rho(x_{i,1}^+))=\ev(x_{i-1,1}^+)+\ve_2 x_{i-1}^+$ under the assumption $\hbar c = N \ve_2$.
\end{proof}

\subsection{Another version}

Guay's evaluation map $\ev$ has $U(\hat{\gl}_N)_{{\rm comp},-}$ as its target space and hence it matches with lowest weight modules of $\hat{\gl}_N^{(-)}$.
To deal with highest weight modules, we introduce an opposite evaluation map by using the anti-isomorphisms $\mu$ and $\mu_U$.
Moreover we make it one-parameter family of algebra homomorphisms by using $\tau_{\alpha}$.
\begin{dfn}
For each $\alpha \in \bbC$, define $\ev_\alpha^+ = \mu_U^{-1} \circ \ev \circ \tau_{\alpha-1} \circ \mu$.
Here $\ev$ and $\tau_{\alpha-1}$ are taken as those for $Y_{-\ve_2,-\ve_1}(\affsl)$.
\end{dfn}

\begin{thm}\label{thm:main}
Assume $\hbar c = -N \ve_1$ and let $\alpha$ be a complex number.
Then there exists an algebra homomorphism $\ev^+_{\alpha} \colon \affY \to U(\hat{\gl}_N)_{{\rm comp},+}$ uniquely determined by 
\begin{gather*}
	\ev^+_{\alpha}(x_{i,0}^{+}) = x_{i}^{+}, \quad \ev^+_{\alpha}(x_{i,0}^{-}) = x_{i}^{-},\quad \ev^+_{\alpha}(h_{i,0}) = h_{i},
\end{gather*}
\begin{gather*}
	\ev^+_{\alpha}(x_{i,1}^{+}) = \begin{cases}
		(\alpha - N \varepsilon_1) x_{0}^{+} + \hbar \displaystyle\sum_{s \geq 0} \sum_{k=1}^N E_{N,k}(-s) E_{k,1}(s+1) \text{ if $i = 0$},\\
		(\alpha - i \varepsilon_1) x_{i}^{+} + \hbar \displaystyle\sum_{s \geq 0} \Big( \sum_{k=1}^i E_{i,k}(-s) E_{k,i+1}(s) + \sum_{k=i+1}^N E_{i,k}(-s-1) E_{k,i+1}(s+1) \Big)\\
		\qquad\qquad\qquad\qquad\qquad\qquad\qquad\qquad\qquad\qquad\qquad\qquad\qquad\qquad \text{ if $i \neq 0$},
	\end{cases}
\end{gather*}
\begin{gather*}
	\ev^+_{\alpha}(x_{i,1}^-) = \begin{cases}
		(\alpha - N \varepsilon_1) x_{0}^{-} + \hbar \displaystyle\sum_{s \geq 0} \sum_{k=1}^N E_{1,k}(-s-1) E_{k,N}(s) \text{ if $i = 0$},\\
		(\alpha - i \varepsilon_1) x_{i}^{-} + \hbar \displaystyle\sum_{s \geq 0} \Big( \sum_{k=1}^i E_{i+1,k}(-s) E_{k,i}(s) + \sum_{k=i+1}^N E_{i+1,k}(-s-1) E_{k,i}(s+1) \Big) \\
		\qquad\qquad\qquad\qquad\qquad\qquad\qquad\qquad\qquad\qquad\qquad\qquad\qquad\qquad\text{ if $i \neq 0$},
	\end{cases}
\end{gather*}
\begin{gather*}
	\ev^+_{\alpha}(h_{i,1}) = \begin{cases}
		(\alpha - N \varepsilon_1) h_{0} - \hbar E_{N,N} (E_{1,1}-c) \\
		\quad + \hbar \displaystyle\sum_{s \geq 0} \sum_{k=1}^{N} \Big( E_{N,k}(-s) E_{k,N}(s) - E_{1,k}(-s-1) E_{k,1}(s+1) \Big) \text{ if $i = 0$},\\
\\
		(\alpha - i \varepsilon_1) h_{i} - \hbar E_{i,i}E_{i+1,i+1} \\
		\quad + \hbar \displaystyle\sum_{s \geq 0} \Big( \sum_{k=1}^{i} E_{i,k}(-s) E_{k,i}(s) + \displaystyle\sum_{k=i+1}^{N} E_{i,k}(-s-1) E_{k,i}(s+1) \\
		\qquad\qquad\quad - \displaystyle\sum_{k=1}^{i} E_{i+1,k}(-s) E_{k,i+1}(s) - \displaystyle\sum_{k=i+1}^{N} E_{i+1,k}(-s-1) E_{k,i+1}(s+1) \Big) \\
		\qquad\qquad\qquad\qquad\qquad\qquad\qquad\qquad\qquad\qquad\qquad\qquad\qquad\qquad \text{ if $i \neq 0$}.
	\end{cases}
\end{gather*}
\end{thm}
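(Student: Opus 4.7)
The plan is to derive Theorem~\ref{thm:main} as a corollary of Theorem~\ref{thm:Guay} by unwinding the definition $\ev^+_\alpha = \mu_U^{-1} \circ \ev \circ \tau_{\alpha-1} \circ \mu$. Because $\mu$ and $\mu_U^{-1}$ are algebra anti-isomorphisms, $\tau_{\alpha-1}$ is an automorphism, and $\ev$ is a homomorphism, the composition is automatically an algebra homomorphism once each piece is well-defined. The one substantive requirement is the well-definedness of $\ev$ on $Y_{-\ve_2,-\ve_1}(\affsl)$: applying Theorem~\ref{thm:Guay} with parameters $\ve_1' = -\ve_2$, $\ve_2' = -\ve_1$, $\hbar' = -\hbar$, one needs $\hbar' c' = N\ve_2'$ for the central charge $c'$ in $U(\hat{\gl}_N)_{{\rm comp},-}$. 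Since $\mu_U$ swaps the two completions while sending the central element to its negative, evaluating the final target at $c$ forces $c' = -c$, and the condition becomes $\hbar c = -N\ve_1$, exactly the hypothesis of the theorem.

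What remains is a direct computation of the images of the generators under the four-step composition. For the degree-zero generators this is immediate, as $\mu$ and $\mu_U^{-1}$ each contribute a sign that cancel. For the degree-one generators, one applies in order: $\mu$, which negates; $\tau_{\alpha-1}$, which produces $-x_{i,1}^\pm - (\alpha-1)x_{i,0}^\pm$ and similarly for $h_{i,1}$; and $\ev$ for $Y_{-\ve_2,-\ve_1}$, using Guay's formula with the substitutions $\ve_2 \mapsto -\ve_1$, $\hbar \mapsto -\hbar$, and central charge $-c$. Finally $\mu_U^{-1}$ sends each $E_{p,q}(s) \mapsto -E_{p,q}(s)$ and reverses the order of products; the two negations on each quadratic term cancel, turning $E_{a,b}(r) E_{c,d}(s)$ into $E_{c,d}(s) E_{a,b}(r)$, while the various scalars combine with the shift $\alpha - 1$ and the overall signs to produce the coefficient $\alpha - i\ve_1$ (or $\alpha - N\ve_1$ when $i=0$).

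The main obstacle is the bookkeeping around the central-charge swap $c \leftrightarrow -c$ induced by $\mu_U^{-1}$, which interacts with the $c$-dependent summand in Guay's formula for $h_{0,1}$. Treating the central element as a formal symbol until the last step makes this transparent: in the formula for $\ev$ on $Y_{-\ve_2,-\ve_1}$ the central element is evaluated at $-c$, so that $-\hbar E_{N,N}(E_{1,1}-c)$ becomes $\hbar E_{N,N}(E_{1,1}+c)$; then $\mu_U^{-1}$ negates $E_{N,N}$ and $E_{1,1}$ while leaving the scalar $c$ unchanged, and after reversing the product one recovers $-\hbar E_{N,N}(E_{1,1}-c)$, as claimed. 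Once this case is checked, the remaining formulas follow by the same procedure, and no defining relations need to be reverified thanks to the homomorphism property established at the start.
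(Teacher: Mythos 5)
Your proposal is correct and takes essentially the same route as the paper: the theorem is obtained by transporting Theorem~\ref{thm:Guay} through the composition $\ev^+_{\alpha}=\mu_U^{-1}\circ\ev\circ\tau_{\alpha-1}\circ\mu$, with exactly the parameter bookkeeping you describe ($\hbar\mapsto-\hbar$, $\ve_2\mapsto-\ve_1$, central charge $-c$), which converts the condition $\hbar c=N\ve_2$ into $\hbar c=-N\ve_1$ and yields the stated formulas after $\mu_U^{-1}$ reverses and negates the quadratic terms.
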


%evaluation
\section{Evaluation modules}

In this section, the symbol $\hat{\gl}_N$ denotes $\hat{\gl}_N^{(+)}$ introduced in Section~\ref{subsection:affine_Lie_algebra}.

Let $L(\Lambda)$ be the integrable irreducible highest weight module of $\hat{\gl}_N$ with highest weight $\Lambda$.
The dominant integral weight $\Lambda$ is determined by the data $\lambda_1, \ldots, \lambda_N \in \bbC$ and $K \in \bbZ_{\geq 1}$ satisfying $\lambda_i-\lambda_{i+1} \in \bbZ_{\geq 0}$ ($1 \leq i \leq N-1$) and $\lambda_N-\lambda_1+K \in \bbZ_{\geq 0}$.
The correspondence is given by
\[
	\langle E_{i,i}, \Lambda \rangle = \lambda_i \ (1 \leq i \leq N) \quad \text{ and } \quad \langle c, \Lambda \rangle = K.
\]
We denote a fixed highest weight vector of $L(\Lambda)$ by $v_{\Lambda}$.
 
Assume $K \hbar = -N \ve_1$.
We define a $\affY$-module $L(\Lambda,\alpha)$ by the pull-back of $L(\Lambda)$ via $\ev^+_{\alpha}$.
By abuse of notation, we regard $v_{\Lambda}$ as a vector of $L(\Lambda,\alpha)$.
The vector $v_{\Lambda}$ satisfies the condition of the highest weight vector for affine Yangian.
More precisely we have the following.
\begin{thm}\label{thm:highest_weight}
We have
\begin{gather*}
	x_{i,r}^+ v_{\Lambda} = 0,\\
	h_{i,r} v_{\Lambda} = a_i^r \langle h_{i}, \Lambda \rangle v_{\Lambda}
\end{gather*}
for all $i \in \bbZ/N\bbZ$ and $r \geq 0$, where
\begin{equation*}
	\begin{split}
		a_i &= \begin{cases}
			\alpha - N\ve_1 + \hbar \lambda_N & \text{if $i=0$},\\
			\alpha - i\ve_1 + \hbar \lambda_i & \text{otherwise},
		\end{cases}\\
		&= \begin{cases}
			\alpha + (\lambda_N+K)\hbar & \text{if $i=0$},\\
			\alpha + \left(\lambda_i+\dfrac{i}{N}K \right)\hbar & \text{otherwise}.
	\end{cases}
	\end{split}
\end{equation*}
\end{thm}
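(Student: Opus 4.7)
I will prove the theorem by induction on $r$. The case $r = 0$ reduces immediately to the highest weight property of $v_{\Lambda}$ under $\hat{\gl}_{N}$, since $\ev^+_{\alpha}(x_{i,0}^+) = x_i^+$ and $\ev^+_{\alpha}(h_{i,0}) = h_i$. For $r = 1$ I substitute the explicit formulas of Theorem~\ref{thm:main} and compute $\ev^+_{\alpha}(x_{i,1}^{\pm}) v_{\Lambda}$ and $\ev^+_{\alpha}(h_{i,1}) v_{\Lambda}$ directly. The key observation is that in each summand of the tail sums the right-most factor $E_{k,j}(s)$ lies in $\affnil_{+}$, and thus annihilates $v_{\Lambda}$, unless it equals a diagonal element $E_{k,k}(0)$, in which case it contributes the scalar $\lambda_{k}$. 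Collecting these surviving contributions along with the finite-type prefactors $-\hbar E_{i,i} E_{i+1,i+1}$ (or $-\hbar E_{N,N}(E_{1,1}-c)$ for $i = 0$) and simplifying yields
\[
	x_{i,1}^{+} v_{\Lambda} = 0,\qquad x_{i,1}^{-} v_{\Lambda} = a_{i}\, x_{i}^{-} v_{\Lambda},\qquad h_{i,1} v_{\Lambda} = a_{i} \langle h_{i}, \Lambda\rangle v_{\Lambda}.
\]
The hypothesis $K\hbar = -N\varepsilon_{1}$ is needed only for $i = 0$ to identify the two displayed forms of $a_{0}$.

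The induction step rests on the identity $[\tilde{h}_{i,1}, x_{i,s}^{\pm}] = \pm 2\, x_{i,s+1}^{\pm}$ in $\affY$, which follows from the $r = 0$, $j = i$ instance of the Guay relation combined with $\tilde{h}_{i,1} = h_{i,1} - \tfrac{\hbar}{2} h_{i,0}^{2}$. Writing $\tilde{h}_{i,1} v_{\Lambda} = \tilde{b}_{i} v_{\Lambda}$ where $\tilde{b}_{i} = a_{i}\langle h_{i},\Lambda\rangle - \tfrac{\hbar}{2}\langle h_{i},\Lambda\rangle^{2}$, applying the identity to $v_{\Lambda}$ produces two recursions
\[
	x_{i,r+1}^{+} v_{\Lambda} = -\tfrac{1}{2}\, x_{i,r}^{+} \tilde{h}_{i,1} v_{\Lambda},\qquad x_{i,r+1}^{-} v_{\Lambda} = -\tfrac{1}{2}(\tilde{h}_{i,1} - \tilde{b}_{i}) x_{i,r}^{-} v_{\Lambda}.
\]
The first immediately forces $x_{i,r+1}^{+} v_{\Lambda} = 0$ from the inductive hypothesis. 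A parallel induction establishes $x_{i,r}^{-} v_{\Lambda} = a_{i}^{r} x_{i}^{-} v_{\Lambda}$: the scalar $a_{i}^{r}$ commutes out of the second recursion, and the single computation $(\tilde{h}_{i,1} - \tilde{b}_{i}) x_{i}^{-} v_{\Lambda} = -2\, x_{i,1}^{-} v_{\Lambda} = -2 a_{i}\, x_{i}^{-} v_{\Lambda}$ drives the step.

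With both auxiliary facts in hand, the eigenvalue of $h_{i,r+1}$ is extracted from the $\affY$-relation $h_{i,r+1} = [x_{i,1}^{+}, x_{i,r}^{-}]$: since $x_{i,1}^{+} v_{\Lambda} = 0$ and $x_{i,r}^{-} v_{\Lambda} = a_{i}^{r} x_{i}^{-} v_{\Lambda}$,
\[
	h_{i,r+1} v_{\Lambda} = x_{i,1}^{+} x_{i,r}^{-} v_{\Lambda} = a_{i}^{r} x_{i,1}^{+} x_{i}^{-} v_{\Lambda} = a_{i}^{r} h_{i,1} v_{\Lambda} = a_{i}^{r+1} \langle h_{i}, \Lambda\rangle v_{\Lambda},
\]
closing the induction. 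The main obstacle is the base case $r = 1$: one must patiently identify which summands in each of the infinite sums of Theorem~\ref{thm:main} actually survive, and then verify that the remaining scalars reorganize into the compact product $a_{i} \langle h_{i}, \Lambda\rangle$ (and its $-$ counterpart). Once this base case is secured, the remainder of the proof is a formal consequence of the single Yangian identity $[\tilde{h}_{i,1}, x_{i,s}^{\pm}] = \pm 2\, x_{i,s+1}^{\pm}$.
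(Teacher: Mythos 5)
Your proof is correct and follows essentially the same route as the paper, which packages your argument into a general lemma (eigenvalue of $h_{i,1}$ plus $x_{i,1}^- v_{\Lambda}=a_i x_{i,0}^- v_{\Lambda}$ imply everything) together with the two base-case computations on the explicit formulas; the only difference is bookkeeping in the induction, where you first prove $x_{i,r}^- v_{\Lambda}=a_i^r x_i^- v_{\Lambda}$ and extract $h_{i,r+1}$ from $[x_{i,1}^+,x_{i,r}^-]$, while the paper uses $h_{i,r+1}=[x_{i,r+1}^+,x_{i,0}^-]$ and $x_{i,r+1}^+=\tfrac12[\tilde{h}_{i,1},x_{i,r}^+]$ so that only the $r=1$ lowering relation is ever needed — both rest on the same defining relation $[x_{i,r}^+,x_{i,s}^-]=h_{i,r+s}$. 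One small inaccuracy: the hypothesis $K\hbar=-N\ve_1$ is not used "only for $i=0$"; it is what makes $\ev^+_{\alpha}$, and hence $L(\Lambda,\alpha)$, exist in the first place, and it is also needed to pass to the second displayed form of $a_i$ for every $i\neq 0$, though this does not affect the validity of your eigenvalue computations.
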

The goal of the remaining part is to prove this theorem.

\begin{rem}\label{rem:thm}
In \cite{MR3923494}, the author proves that the image of Guay's evaluation map $\ev$ in $U(\hat{\gl}_N)_{{\rm comp},-}$ contains $\hat{\gl}_N^{(-)}$ under the assumption $\ve_2 \neq 0$.
This result implies that the image of $\ev^+_{\alpha}$ in $U(\hat{\gl}_N)_{{\rm comp},+}$ contains $\hat{\gl}_N^{(+)}$ under the assumption $\ve_1 \neq 0$.
Hence the $\affY$-module $L(\Lambda,\alpha)$ is irreducible when $\ve_1 \neq 0$.
We do not use this fact in the proof given below.
\end{rem}

We provide a general lemma.
\begin{lem}\label{lem:general_lemma}
Let $v$ be a nonzero element of a $\affY$-module satisfying
\begin{equation}
	x_{i,0}^+ v = 0, \quad h_{i,0} v = P_{i,0} v \label{eq:assumption1}
\end{equation}
for some $P_{i,0} \in \bbC$.
Further assume that $v$ satisfies
\begin{equation}
	h_{i,1} v = P_{i,1} v, \label{eq:assumption2}
\end{equation}
\begin{equation}
	x_{i,1}^- v = Q_{i} x_{i,0}^- v \label{eq:assumption3}
\end{equation}
for some $P_{i,1}, Q_{i} \in \bbC$.
Then $v$ satisfies
\[
	x_{i,r}^+ v = 0,\quad h_{i,r} v = P_{i,r} v
\]
for all $r \geq 0$, where $P_{i,r}$ $(r \geq 2)$ is given by
\begin{equation}
	P_{i,r} = Q_{i}^{r-1} P_{i,1}. \label{eq:eigen}
\end{equation}
\end{lem}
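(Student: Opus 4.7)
The plan is to induct on $r$, establishing $x_{i,r}^+ v = 0$ and $h_{i,r} v = P_{i,r} v$ simultaneously. The bases $r = 0$ for both and $r = 1$ for $h$ are given by (\ref{eq:assumption1}) and (\ref{eq:assumption2}). To launch the induction at $r = 1$ for $x^+$, I use the defining relation of $\affY$ with $j = i$, $r = s = 0$, which (since $a_{ii} = 2$, $m_{ii} = 0$) can be rewritten as
\begin{equation*}
2 x_{i,1}^+ = [h_{i,1}, x_{i,0}^+] - \hbar \{h_{i,0}, x_{i,0}^+\}.
\end{equation*}
Applied to $v$, both terms on the right vanish because $x_{i,0}^+ v = 0$ and $v$ is an eigenvector of $h_{i,0}$ and $h_{i,1}$; hence $x_{i,1}^+ v = 0$.

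For the inductive step at $r \geq 2$, I exploit two specializations of the defining relations. First, the same identity used above, with $s$ shifted from $0$ to $r-1$, gives
\begin{equation*}
2 x_{i,r}^+ = [h_{i,1}, x_{i,r-1}^+] - \hbar \{h_{i,0}, x_{i,r-1}^+\},
\end{equation*}
whose right-hand side annihilates $v$ under the inductive hypothesis $x_{i,r-1}^+ v = 0$. Second, the commutator relation $[x_{i,r-1}^+, x_{i,1}^-] = h_{i,r}$ yields
\begin{equation*}
h_{i,r} v = x_{i,r-1}^+ (x_{i,1}^- v) - x_{i,1}^- (x_{i,r-1}^+ v).
\end{equation*}
The second term vanishes by the freshly established $x_{i,r-1}^+ v = 0$, and assumption (\ref{eq:assumption3}) turns the first into $Q_i x_{i,r-1}^+ x_{i,0}^- v = Q_i [x_{i,r-1}^+, x_{i,0}^-] v = Q_i h_{i,r-1} v = Q_i P_{i,r-1} v$. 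An obvious iteration then delivers $P_{i,r} = Q_i^{r-1} P_{i,1}$.

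The argument is essentially mechanical once the two consequences of the defining relations are isolated; there is no substantive obstacle. The only point of care is the order of operations within each inductive step: one must derive $x_{i,r}^+ v = 0$ first (using the previous $x_{i,r-1}^+ v = 0$ and the eigenvector property for $h_{i,0}, h_{i,1}$), and only then use $x_{i,r-1}^+ v = 0$ a second time to reduce $h_{i,r}$. Note that neither the Serre relations nor the $m_{ij}$-terms intervene, since the whole computation takes place at $j = i$.
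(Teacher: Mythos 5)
Your proof is correct. The induction and the treatment of $x_{i,r}^+v=0$ are essentially the paper's: your identity $2x_{i,r}^+=[h_{i,1},x_{i,r-1}^+]-\hbar\{h_{i,0},x_{i,r-1}^+\}$ is exactly the relation $[\tilde{h}_{i,1},x_{i,r-1}^+]=2x_{i,r}^+$ used in the paper, with $\tilde{h}_{i,1}=h_{i,1}-\tfrac{\hbar}{2}h_{i,0}^2$ written out. Where you genuinely diverge is the $h$-step: you take $h_{i,r}=[x_{i,r-1}^+,x_{i,1}^-]$, so that assumption (\ref{eq:assumption3}) enters in one line, whereas the paper takes $h_{i,r+1}=[x_{i,r+1}^+,x_{i,0}^-]$, expands $x_{i,r+1}^+=\tfrac12[\tilde{h}_{i,1},x_{i,r}^+]$, and lets (\ref{eq:assumption3}) enter only through $[\tilde{h}_{i,1},x_{i,0}^-]=-2x_{i,1}^-$. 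Your version is shorter; it uses the relation $[x_{i,r}^{+},x_{j,s}^{-}]=\delta_{ij}h_{i,r+s}$ at $s=1$, which is available from the full presentation in Definition~2.1 and is perfectly legitimate here since the lemma concerns modules over $\affY$ itself, while the paper's computation stays with the $s=0$ case and the $\tilde{h}_{i,1}$-commutators it uses throughout the rest of the argument. One small slip of wording: in your $h$-step the term $x_{i,1}^-\,x_{i,r-1}^+v$ vanishes by the inductive hypothesis $x_{i,r-1}^+v=0$, not by the ``freshly established'' $x_{i,r}^+v=0$; your closing remark states the correct dependence, and in fact the $h$-step does not use $x_{i,r}^+v=0$ at all, so the ordering you insist on within each step is immaterial.
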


\begin{proof}
We prove $x_{i,r}^+ v = 0$ by induction on $r$.
The assertion for $r=0$ holds by the assumption~(\ref{eq:assumption1}).
Assume it for $r$.
Then we have
\begin{equation*}
	\begin{split}
		x_{i,r+1}^+ v = \dfrac{1}{2} [\tilde{h}_{i,1}, x_{i,r}^+] v = -\dfrac{1}{2} x_{i,r}^+ \tilde{h}_{i,1} v = -\dfrac{1}{2} x_{i,r}^+ \left( h_{i,1}-\dfrac{\hbar}{2}h_{i,0}^2 \right) v
	\end{split}
\end{equation*}	
by the induction assumption, and it is equal to $0$ by the assumptions~(\ref{eq:assumption1}), (\ref{eq:assumption2}) and again by the induction assumption. 

We prove that $v$ is an eigenvector of $h_{i,r}$ with the eigenvalue (\ref{eq:eigen}) by induction on $r$.
The assertion for $r=1$ holds by the assumption (\ref{eq:assumption2}).
Assume it for $r$.
We have
\begin{equation*}
	\begin{split}
		h_{i,r+1} v &= [x_{i,r+1}^+,x_{i,0}^-] v = x_{i,r+1}^+ x_{i,0}^- v = \dfrac{1}{2}[\tilde{h}_{i,1},x_{i,r}^+] x_{i,0}^- v\\
		&= \dfrac{1}{2} \left( \tilde{h}_{i,1} x_{i,r}^+ x_{i,0}^- v - x_{i,r}^+ \tilde{h}_{i,1} x_{i,0}^- v \right).
	\end{split}
\end{equation*}
Here we use $x_{i,r+1}^+ v = 0$ in the second equality.	Then we have
\begin{equation*}
	\begin{split}
		\tilde{h}_{i,1} x_{i,r}^+ x_{i,0}^- v &= \tilde{h}_{i,1} ([x_{i,r}^+, x_{i,0}^-] + x_{i,0}^- x_{i,r}^+ )v \\
		&= \tilde{h}_{i,1} h_{i,r} v \quad \text{by $x_{i,r}^+ v=0$} \\
		&= \left(P_{i,1}-\dfrac{\hbar}{2}P_{i,0}^2 \right) P_{i,r} v \quad \text{by (\ref{eq:assumption1}), (\ref{eq:assumption2}), the induction assumption,}
	\end{split}
\end{equation*}
and
\begin{equation*}
	\begin{split}
		x_{i,r}^+ \tilde{h}_{i,1} x_{i,0}^- v &= x_{i,r}^+ ([\tilde{h}_{i,1}, x_{i,0}^-] + x_{i,0}^- \tilde{h}_{i,1} )v \\
		&= -2 x_{i,r}^+ x_{i,1}^- v + \left(P_{i,1}- \dfrac{\hbar}{2} P_{i,0}^2\right)x_{i,r}^+ x_{i,0}^- v \quad \text{by (\ref{eq:assumption1}), (\ref{eq:assumption2})}\\
		&= \left( -2Q_{i} + P_{i,1}- \dfrac{\hbar}{2} P_{i,0}^2\right)x_{i,r}^+ x_{i,0}^- v \quad \text{by (\ref{eq:assumption3})}\\
		&= \left( -2Q_{i} + P_{i,1}- \dfrac{\hbar}{2} P_{i,0}^2 \right)h_{i,r} v \quad \text{by $x_{i,r}^+ v=0$}\\
		&= \left( -2Q_{i} + P_{i,1}- \dfrac{\hbar}{2} P_{i,0}^2 \right) P_{i,r} v \quad \text{by the induction assumption.}
	\end{split}
\end{equation*}
Hence we conclude $h_{i,r+1}v = Q_{i} P_{i,r} v$, which completes the proof.
\end{proof}

The vector $v_{\Lambda}$ satisfies the condition (\ref{eq:assumption1}).
In order to apply Lemma~\ref{lem:general_lemma} to our situation, we compute $h_{i,1} v_{\Lambda}$ and $x_{i,1}^- v_{\Lambda}$.
\begin{lem}\label{lem:r=1}
We have $h_{i,1} v_{\Lambda} = a_i \langle h_{i}, \Lambda \rangle v_{\Lambda}$ for all $i \in \bbZ/N\bbZ$.
\end{lem}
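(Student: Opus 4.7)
The plan is to substitute the explicit formula for $\ev^+_\alpha(h_{i,1})$ given in Theorem~\ref{thm:main} and evaluate each summand on $v_\Lambda$, exploiting the fact that $v_\Lambda$ is annihilated by $\affnil_+$, that $\affCar$ acts on $v_\Lambda$ by the weight $\Lambda$ (so $E_{j,j} v_\Lambda = \lambda_j v_\Lambda$ and $c v_\Lambda = K v_\Lambda$), and the description of $\affnil_\pm$ from \ref{subsection:affine_Lie_algebra}. Concretely, any $E_{a,b}(m) v_\Lambda$ with $m>0$ vanishes, and $E_{a,b}(0) v_\Lambda$ vanishes when $a<b$.

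First consider $i \neq 0$. Every double-sum summand in $\ev^+_\alpha(h_{i,1})$ has the form $E_{a,k}(\leq 0) \cdot E_{k,b}(\geq 0) $, with the right factor of nonnegative loop degree. For $s \geq 1$ (in the sums over $E_{\cdot, k}(-s) E_{k,\cdot}(s)$) or for all $s \geq 0$ (in the sums over $E_{\cdot,k}(-s-1) E_{k,\cdot}(s+1)$), the right factor lies in $\affnil_+$ and kills $v_\Lambda$. So the only potential survivors are the $s=0$ pieces of $\sum_{k=1}^i E_{i,k}(-s) E_{k,i}(s)$ and $\sum_{k=1}^i E_{i+1,k}(-s) E_{k,i+1}(s)$. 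In the first sum $E_{k,i}(0) v_\Lambda = 0$ for $k<i$, so only the $k=i$ term remains, contributing $\lambda_i^2 v_\Lambda$. In the second sum $k \leq i < i+1$ forces $E_{k,i+1}(0) v_\Lambda = 0$ throughout, so its total contribution is zero. Adding the finite pieces $(\alpha - i\ve_1) h_i v_\Lambda = (\alpha - i\ve_1)(\lambda_i - \lambda_{i+1}) v_\Lambda$ and $-\hbar E_{i,i} E_{i+1,i+1} v_\Lambda = -\hbar \lambda_i \lambda_{i+1} v_\Lambda$ and factoring yields
\[
h_{i,1} v_\Lambda = \bigl( \alpha - i \ve_1 + \hbar \lambda_i\bigr)(\lambda_i - \lambda_{i+1}) v_\Lambda = a_i \langle h_i, \Lambda\rangle v_\Lambda.
\]
The case $i=0$ is handled in exactly the same way: the only surviving infinite-sum contribution is the $s=0, k=N$ term of $\sum_{k=1}^N E_{N,k}(-s) E_{k,N}(s)$, giving $\lambda_N^2 v_\Lambda$, while the $\sum E_{1,k}(-s-1) E_{k,1}(s+1)$ sum vanishes entirely. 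Combined with $(\alpha - N\ve_1) h_0 v_\Lambda = (\alpha - N\ve_1)(\lambda_N - \lambda_1 + K) v_\Lambda$ and $-\hbar E_{N,N}(E_{1,1}-c) v_\Lambda = -\hbar \lambda_N(\lambda_1 - K) v_\Lambda$, the same factoring yields $h_{0,1} v_\Lambda = (\alpha - N\ve_1 + \hbar \lambda_N)(\lambda_N - \lambda_1 + K) v_\Lambda = a_0 \langle h_0, \Lambda\rangle v_\Lambda$.

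There is no serious obstacle: the argument is essentially bookkeeping of which loop-indices $s$ and which matrix-unit indices $k$ give nonzero contributions on the highest weight vector. The hypothesis $\hbar c = -N \ve_1$ needed for the existence of $\ev^+_\alpha$ has already been absorbed into Theorem~\ref{thm:main}, so it does not play a separate role in this lemma; it appears only implicitly, through the well-definedness of the map being applied to $v_\Lambda$.
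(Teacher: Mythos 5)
Your proposal is correct and follows essentially the same route as the paper: substitute the explicit formula for $\ev^+_{\alpha}(h_{i,1})$, observe that on $v_{\Lambda}$ everything dies except the terms involving $h_i$, $E_{i,i}E_{i+1,i+1}$ and $E_{i,i}^2$ (resp.\ $h_0$, $E_{N,N}(E_{1,1}-c)$ and $E_{N,N}^2$), and factor the resulting eigenvalue as $a_i\langle h_i,\Lambda\rangle$. Your version just spells out the bookkeeping of the surviving $s=0$, $k=i$ (resp.\ $k=N$) terms more explicitly than the paper does.
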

\begin{proof}
We give a proof for $i\neq 0$.
The case $i=0$ is similar.
Recall the formula for $\ev^+_{\alpha}(h_{i,1})$ and note that
 all the terms in $\ev^+_{\alpha}(h_{i,1})$ annihilate $v_{\Lambda}$ except for those concerning
\[
	h_i v_{\Lambda} = \langle h_{i}, \Lambda \rangle v_{\Lambda},
\]
\[
	E_{i,i}E_{i+1,i+1} v_{\Lambda} = \lambda_i \lambda_{i+1} v_{\Lambda},
\]
\[
	E_{i,i}^2 v_{\Lambda} = \lambda_i^2 v_{\Lambda}.
\]
Then we see that $v_{\Lambda}$ is an eigenvector of $h_{i,1}$ with the eigenvalue
\begin{equation*}
	(\alpha - i\ve_1)\langle h_{i}, \Lambda \rangle -\hbar \lambda_i \lambda_{i+1} + \hbar \lambda_i^2 = (\alpha - i\ve_1 + \hbar \lambda_i)\langle h_{i}, \Lambda \rangle.
\end{equation*}	
\end{proof}

\begin{lem}\label{lem:wt-2}
We have $x_{i,1}^- v_{\Lambda} = a_i x_i^- v_{\Lambda}$ for all $i \in \bbZ/N\bbZ$.
\end{lem}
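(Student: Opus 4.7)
The plan is to prove this lemma by a direct computation using the explicit formula for $\ev^+_\alpha(x_{i,1}^-)$ given in Theorem~\ref{thm:main}, together with the fact that $v_\Lambda$ is a highest weight vector for $\hat{\gl}_N$ and so is annihilated by every element of $\affnil_+$. This is exactly parallel to the calculation of $h_{i,1} v_\Lambda$ in Lemma~\ref{lem:r=1}, only a bit simpler because the formula for $x_{i,1}^-$ has no extra Cartan correction term.

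First I would treat the case $i \neq 0$. Write out $\ev^+_\alpha(x_{i,1}^-) v_\Lambda$ and examine each summand. For the sum $\sum_{s \geq 0}\sum_{k=1}^i E_{i+1,k}(-s) E_{k,i}(s) v_\Lambda$, the inner factor $E_{k,i}(s)$ lies in $\affnil_+$ whenever $s \geq 1$ (since then $k \geq i$ or $k < i$ both fall into a positive piece), and also when $s = 0$ and $k < i$. The unique surviving case is $s = 0, k = i$, in which $E_{i,i}(0) v_\Lambda = \lambda_i v_\Lambda$, contributing $\lambda_i E_{i+1,i}(0) v_\Lambda = \lambda_i x_i^- v_\Lambda$. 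For the sum $\sum_{s \geq 0}\sum_{k=i+1}^N E_{i+1,k}(-s-1) E_{k,i}(s+1) v_\Lambda$, every $E_{k,i}(s+1)$ has $k > i$ and $s+1 > 0$, so it lies in $\affnil_+$ and the whole sum vanishes. Adding the leading term $(\alpha - i\ve_1) x_i^- v_\Lambda$ yields $(\alpha - i\ve_1 + \hbar \lambda_i) x_i^- v_\Lambda = a_i x_i^- v_\Lambda$.

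The case $i = 0$ is handled by the same method on the formula for $\ev^+_\alpha(x_{0,1}^-)$: in $\sum_{s \geq 0}\sum_{k=1}^N E_{1,k}(-s-1) E_{k,N}(s) v_\Lambda$, the factor $E_{k,N}(s)$ belongs to $\affnil_+$ unless $s=0$ and $k = N$, in which case $E_{N,N}(0) v_\Lambda = \lambda_N v_\Lambda$. This gives the surviving contribution $\hbar \lambda_N x_0^- v_\Lambda$, and combining with $(\alpha - N\ve_1) x_0^- v_\Lambda$ produces $a_0 x_0^- v_\Lambda$.

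There is essentially no obstacle here beyond careful bookkeeping of which matrix units lie in $\affnil_+$ as recorded in Section~\ref{subsection:affine_Lie_algebra}. The only point that requires any attention is the off-by-one shift between the two ranges of summation ($s$ versus $s+1$, and $k \leq i$ versus $k > i$), which is exactly what forces the single surviving term to appear in the first sum and not the second.
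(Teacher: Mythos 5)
Your proof is correct and follows essentially the same route as the paper: plug in the explicit formula for $\ev^+_{\alpha}(x_{i,1}^-)$, observe that every quadratic term kills $v_{\Lambda}$ via $\affnil_+$ except the single one with $s=0$ and $k=i$ (resp.\ $k=N$ when $i=0$), which contributes $\hbar\lambda_i x_i^- v_{\Lambda}$. The only difference is that you spell out the $i=0$ case, which the paper leaves as ``similar.''
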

\begin{proof}
We give a proof for $i\neq 0$.
The case $i=0$ is similar.
Recall the formula for $\ev^+_{\alpha}(x_{i,1}^-)$.
Then we have
\begin{equation*}
	\begin{split}
		x_{i,1}^- v_{\Lambda} = (\alpha - i\ve_1) x_i^- v_{\Lambda} + \hbar E_{i+1,i}E_{i,i} v_{\Lambda} = (\alpha - i\ve_1 + \hbar \lambda_i) x_i^- v_{\Lambda}.
	\end{split}
\end{equation*}	
\end{proof}

\begin{proof}[Proof of Theorem~\ref{thm:highest_weight}]

By Lemma~\ref{lem:general_lemma}, \ref{lem:r=1}, \ref{lem:wt-2}, we see that $x_{i,r}^+ v_{\Lambda}=0$ and $v_{\Lambda}$ is an eigenvector of $h_{i,r}$ with the eigenvalue
\[
	a_i^{r-1} \times a_i \langle h_{i}, \Lambda \rangle = a_i^r \langle h_{i}, \Lambda \rangle.
\]
\end{proof}

Let us consider the generating series
\[
	1 + \hbar \sum_{r=0}^{\infty} a_i^{r} \langle h_{i}, \Lambda \rangle u^{-r-1} \in \bbC[[u^{-1}]]
\]
of the eigenvalues of $h_{i,r}$ on $v_{\Lambda}$.
It is equal to
\[
	\dfrac{u-a_i+\langle h_{i}, \Lambda \rangle \hbar}{u-a_i} = \dfrac{\pi_i(u+\hbar)}{\pi_i(u)}
\]
where we put $\pi_i(u) = (u-a_i)(u-a_i+\hbar)\cdots(u-a_i+(\langle h_{i}, \Lambda \rangle-1)\hbar)$.
The collection $(\pi_i(u))$ is an analog of the Drinfeld polynomials of finite-dimensional irreducible modules for the Yangian associated with a simple Lie algebra.

In \cite{MR3898327, MR3869424}, the author constructed the level-one Fock representation $F$ of $\affY$.
The polynomials ($\pi_i(u)$) corresponding to its highest weight is
\[
	\pi_i(u)=\begin{cases}
	u & \text{if $i=0$},\\
	1 & \text{otherwise}
	\end{cases}
\]  
by \cite[Theorem~5.7]{MR3898327} (the highest weight vector is denoted by $b_{\varnothing}$).
We see that the level-one Fock representation at $\hbar = -N\ve_1$ with $\ve_1 \neq 0$ is an example of evaluation modules.
Indeed, take $\Lambda$ as the level-one dominant integral weight corresponding to $\lambda_1 = \cdots = \lambda_N=0$ and put $\alpha=-\hbar$.
Then the highest weights of $F$ and $L(\Lambda, -\hbar)$ are the same.
As we mentioned in Remark~\ref{rem:thm}, $L(\Lambda, -\hbar)$ is irreducible.
Since $F$ and $L(\Lambda)$ are isomorphic as $\affsl$-modules, $F$ and $L(\Lambda, -\hbar)$ are isomorphic as $\affY$-modules.

\providecommand{\bysame}{\leavevmode\hbox to3em{\hrulefill}\thinspace}
\providecommand{\MR}{\relax\ifhmode\unskip\space\fi MR }
% \MRhref is called by the amsart/book/proc definition of \MR.
\providecommand{\MRhref}[2]{%
  \href{http://www.ams.org/mathscinet-getitem?mr=#1}{#2}
}
\providecommand{\href}[2]{#2}

%\bibliographystyle{amsalpha}
%\bibliography{ref}

\end{document}